\newtheorem{defn}{Definition}[section]
\newtheorem{thm}[defn]{Theorem}
\newtheorem{prop}[defn]{Proposition}
\newtheorem{cor}[defn]{Corollary}
\newtheorem{re}[defn]{Remark}
\numberwithin{equation}{section}
\begin{document}
\title{{\bf $3$-BiHom-Lie superalgebras induced by BiHom-Lie superalgebras}}
\author{\normalsize \bf  Abdelkader Ben Hassine\small{$^{1,2}$},  Sami Mabrouk\small{$^{3}$}, Othmen Ncib\small{$^{4}$}}
\date{{\small{$^{1}$ Department of Mathematics, Faculty of Science and Arts at
Belqarn, P. O. Box 60, Sabt Al-Alaya 61985, University of Bisha, Saudi Arabia \\  \small{$^{2}$    Faculty of Sciences, University of Sfax,   BP
1171, 3000 Sfax, Tunisia \\  \small{$^{3,4}$} Faculty of Sciences, University of Gafsa,   BP
2100, Gafsa, Tunisia
 }}}} \maketitle

{\bf\begin{center}{Abstract}\end{center}}
 The purpose of this paper is to study the relationships between
a BiHom-Lie superalgebras and its induced 3-BiHom-Lie superalgebras. We introduce the notion of $(\alpha^s,\beta^r)$-derivation, $(\alpha^s,\beta^r)$-quasiderivation and generalized $(\alpha^s,\beta^r)$-derivation of 3-BiHom-Lie superalgebras, and their relation with derivation of BiHom-Lie superalgebras. We introduce also the concepts of Rota-Baxter operators and Nijenhuis Operators of BiHom $3$-Lie superalgebras. We also explore the construction of $3$-BiHom-Lie superalgebras by using Rota-Baxter of BiHom-Lie superalgebras.

\noindent\textbf{Keywords:} BiHom-Lie superalgebras, 3-BiHom-Lie superalgebras, $(\alpha^s,\beta^r)$-derivation, Rota-Baxter operators, Nijenhuis Operators.

\noindent\textbf{Keywords:}
\noindent{\textbf{MSC(2010):}}  17A40; 17B70

\renewcommand{\thefootnote}{\fnsymbol{footnote}}
\footnote[0]{ Corresponding author(S.Mabrouk): mabrouksami00@yahoo.fr}

 \section*{Introduction}
BiHom-type generalizations of $n$-ary Nambu-Lie algebras, called $n$-ary BiHom-Nambu-Lie algebras, were introduced by
 Kitouni, Makhlouf, and  Silvestrov  in \cite{Kitouni&Makhlouf&Silvestrov}. Each $n$-ary BiHom-Nambu-Lie algebra has $(n-1)$-linear twisting maps, which appear
in a twisted generalization of the $n$-ary Nambu identity called the $n$-ary BiHom-Nambu identity. If the twisting maps are all
equal to the identity, one recovers an $n$-ary Nambu-Lie algebra. The twisting maps provide a substantial amount of freedom
in manipulating Nambu-Lie algebras.
In recent years, Rota-Baxter (associative) algebras, originated from the work of G. Baxter \cite{Baxter} in probability and populated
by the work of Cartier and Rota \cite{Cartier,Rota}, have also been studied in connection with many areas of mathematics and physics,
including combinatorics, number theory, operators and quantum field theory \cite{Rota1}. Furthermore, Rota-Baxter operators on a Lie algebra are an operator form of the classical
Yang-Baxter equations and contribute to the study of integrable systems \cite{Bai&Guo&Ni}. Further Rota-Baxter $3$-Lie algebras are
closely related to pre-Lie algebras \cite{Bai&Guo&Li&Wu}. Rota-Baxter of multiplicative $3$-ary Hom-Nambu-Lie algebras were introduced by Sun and Chen, in \cite{Sun&Chen}.

Deformations of $n$-Lie algebras have been
studied from several aspects. See  \cite{Azcárraga,Figueroa}
for more details. In particular, a notion of a Nijenhuis
operator on a $3$-Lie algebra was introduced in  \cite{Zhang}
in the study of the $1$-order deformations of a $3$-Lie algebra. But there are some quite strong conditions in this
definition of a Nijenhuis operator. In the case of Lie algebras, one could obtain fruitful results by considering
one-parameter infinitesimal deformations, i.e. $1$-order deformations. However, for $n$-Lie algebras, we believe that
one should consider $(n-1)$-order deformations to obtain
similar results. In \cite{Figueroa}, for $3$-Lie algebras, the author had already considered $2$-order deformations. For the case of Hom-Lie superalgebras, the authors in \cite{Liu&chen&Ma} give the notion of Hom-Nijenhuis operator.

Thus it is time to study $3$-BiHom-Lie superalgebras, Rota-Baxter algebras and Nijenhuis operator together to get a suitable
definition of Rota-Baxter of $3$-BiHom-Lie superalgebras induced by BiHom-Lie superalgebras. Similarly, we give the relationship between Nijenhuis operator of $3$-BiHom-Lie superalgebras and BiHom-Lie superalgebras.

This paper is organized as follows: In Section $1$, we recall the concepts of BiHom-Lie superalgebras and introduce the notion of $3$-BiHom-Lie superalgebras. The construction of $3$-BiHom-Lie superalgebras induced by BiHom-Lie superalgebras are established in Section $2$. In section $3$, we give the definition of $(\alpha^s,\beta^r)$-derivation and $(\alpha^s,\beta^r)$-quasiderivation of $3$-BiHom-Lie superalgebras. In section $4$, we give the definition of Rota-Baxter of $3$-BiHom-Lie superalgebras and the realizations of Rota-Baxter  of $3$-BiHom-Lie superalgebras from Rota-Baxter  BiHom-Lie superalgebras.
The Section $5$ is dedicated to study the second order deformation of $3$-BiHom-Lie superalgebras, and introduce the notion of Nijenhuis operator on $3$-BiHom-Lie superalgebras, which could generate a trivial deformation. In the other part of this section we give some properties and results of Nijenhuis operators.

\section{ Definitions and Notations}
In this section, we review basics definition of BiHom-Lie superalgebras, $3$-Lie superalgebras and generalize the notion of $3$-BiHom-Lie algebras to the super case.

Let $V= V_{\overline 0}\oplus V_{\overline 1}$ be a $\mathbb{Z}_2$-graded vector space. If
$v \in V$ is a homogenous element, then its degree will be denoted by $|v|$,
where $|v|\in \mathbb{Z}_2$ and $\mathbb{Z}_2=\{\overline 0,\overline 1\}$. Let $End(V)$ be the $\mathbb{Z}_2$-graded vector space of
endomorphisms of a $\mathbb{Z}_2$-graded vector space $V= V_{\overline 0}\oplus V_{\overline 1}$, we denote by $\mathcal{H}(V)$ the set of homogenous elements of $V$. The composition of
two endomorphisms $a\circ b$ determines the structure of superalgebra in $End(V)$,
and the graded binary commutator $[a, b] = a\circ b - (-1)^{|a||b|}b \circ a$ induces the
structure of Lie superalgebras in $End(V)$. 

\begin{defn}\cite{Guo}
A BiHom-Lie superalgebra is a triple $(\mathfrak g, [\cdot, \cdot], \alpha,\beta)$ consisting
of a $\mathbb{Z}_2$-graded vector space $\mathfrak g=\mathfrak g_{\overline 0}\oplus \mathfrak g_{\overline 1}$, an even bilinear map $[\cdot,\cdot] : \mathfrak g\times\mathfrak g\longrightarrow\mathfrak g$ and a two even homomorphisms $\alpha,\beta: \mathfrak g \rightarrow \mathfrak g$ satisfying the following
 identities:
\begin{eqnarray}
& &\alpha\circ\beta=\beta\circ\alpha,\label{Multiplicativity1}\\
& &\alpha([x,y])=[\alpha(x),\alpha(y)],\ \beta([x,y])=[\beta(x),\beta(y)],\label{Multiplicativity2}\\
& &[\beta(x),\alpha(y)] = -(-1)^{|x||y|}[\beta(y), \alpha(x)],\ \label{BiHom-skewsupersymmetry}\\
& &\displaystyle\circlearrowleft_{x,y,z}(-1)^{|x||z|}[\beta^2(x), [\beta(y),\alpha(z)]] = 0\ \label{BiHomSuperJacobi} .
\end{eqnarray}
where $x, y$ and $z$ are homogeneous elements in $\mathfrak g$. The condition \eqref{BiHomSuperJacobi} is called BiHom-super-Jacobi identity.\\If the conditions \eqref{Multiplicativity1} and \eqref{Multiplicativity2}  are not satisfying, then BiHom-Lie superalgebra is called nonmultiplicative BiHom-Lie superalgebra.
\end{defn}
\begin{defn} \cite{Cantarini&Kac}
A $\mathbb{Z}_2$-graded vector space $\mathfrak{g}=\mathfrak g_{\overline 0}\oplus\mathfrak g_{\overline 1}$ is said to be a $3$-Lie superalgebra, if it is endowed with a trilinear map (bracket) $[\cdot,\cdot,\cdot]:\mathfrak g\times \mathfrak g \times \mathfrak g\rightarrow \mathfrak g$. If it satisfies the following conditions:
{\small\begin{align*}
[x, y, z]& = -(-1)^{|x||y|}[y, x, z],\\ [x, y, z] &= -(-1)^{|y||z|}[x, z, y],\\
[x, y, [z, u, v]] = [[x, y, z], u, v]&+(-1)^{|z|(|x|+|y|)}[z, [x, y, u], v]+(-1)^{(|z|+|u|)(|x|+|y|)}[z, u, [x, y, v]],
\end{align*}}
where $x, y, z, u, v \in\mathfrak g$ are homogeneous elements.

\end{defn}

\begin{defn}
A nonmultiplicative $3$-BiHom-Lie superalgebra is a quadruple $(\mathfrak g, [\cdot, \cdot,\cdot],\alpha ,\beta)$
consisting of a $\mathbb{Z}_2$-graded vector space $\mathfrak g =\mathfrak g_{\overline 0}\oplus\mathfrak g_{\overline 1}$, an even trilinear map
(bracket) $[\cdot,\cdot,\cdot]:\mathfrak g \times \mathfrak  g \times \mathfrak  g \rightarrow\mathfrak  g$ and two even  endomorphisms $\alpha,\beta:\mathfrak g \rightarrow \mathfrak g$. If it satisfies the following conditions:
\begin{eqnarray}
&&[\beta(x),\beta(y), \alpha(z)]=-(-1)^{|x||y|}[\beta(y),\beta(x),\alpha( z)],\nonumber\\
&&~~  [\beta(x), \beta(y),\alpha(z)] = -(-1)^{|y||y|}[\beta(x),\beta(z),\alpha(y)];\nonumber\\
&&[\beta^2(x), \beta^2(y), [\beta(z),\beta(u),\alpha(v)]]=(-1)^{(|u|+|v|)(|x|+|y|+|z|)}[ \beta^2(u),\beta^2(v),[\beta(x),\beta(y),\alpha(z)]]\nonumber\\
&&- (-1)^{(|z|+|v|)(|x|+|y|)+|u||v|}
[\beta^2(z),\beta^2(v),[\beta(x),\beta(y),\alpha(u)]]\nonumber\\
&&+(-1)^{(|z|+|u|)(|x|+|y|)}[\beta^2(z),\beta^2(u), [\beta(x),\beta(y),\alpha(v)]],\label{BiHom-Nambu}
\end{eqnarray}
where $x, y, z, u, v \in\mathfrak g$ are homogeneous elements.
The condition \eqref{BiHom-Nambu} is called the $3$-BiHom-super-Jacobi identity.
\end{defn}
\begin{re}
The identity \eqref{BiHom-Nambu} is equivalent to
{\small\begin{eqnarray*}
&[\beta^2(x), \beta^2(y), [\beta(z),\beta(u),\alpha(v)]]=(-1)^{|z||v|}\displaystyle\circlearrowleft_{uvz}(-1)^{\gamma}[ \beta^2(u),\beta^2(v),[\beta(x),\beta(y),\alpha(z)]],
\end{eqnarray*}}
where $\gamma=(-1)^{(|u|+|v|)(|x|+|y|)+|z||u|}.$
\end{re}
\begin{defn}
A $3$-BiHom-Lie superalgebra is a  nonmultiplicative $3$-BiHom-Lie superalgebra $(\mathfrak g, [\cdot, \cdot,\cdot],\alpha ,\beta)$ such that
\begin{eqnarray}
&&\alpha\circ\beta=\beta\circ\alpha,\\
&&\alpha([x,y,z])=[\alpha(x),\alpha(y),\alpha(z)]\;\;and\;\;\beta([x,y,z])=[\beta(x),\beta(y),\beta(z)],
\end{eqnarray}
for all $x,y,z\in\mathcal{H}(\mathfrak{g}).$
\end{defn}
\begin{re}
If \; $\alpha=\beta=Id$ we recover the $3$-Lie superalgebra.
\end{re}
%
\begin{prop}
Let $(\mathfrak{g},[\cdot,\cdot,\cdot])$ be a $3$-Lie superalgebra and $\alpha,\beta:\mathfrak{g}\rightarrow\mathfrak{g}$ are two commuting morphisms on $\mathfrak{g}$, then
$(\mathfrak{g},[\cdot,\cdot,\cdot]_{\alpha,\beta},\alpha,\beta)$ is a $3$-BiHom-Lie superalgebra,\\
where $[x,y,z]_{\alpha,\beta}=[\alpha(x),\alpha(y),\beta(z)]$.

\end{prop}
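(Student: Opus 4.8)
The plan is to check, one family of axioms at a time, that the quadruple $(\mathfrak{g},[\cdot,\cdot,\cdot]_{\alpha,\beta},\alpha,\beta)$ satisfies the definition of a $3$-BiHom-Lie superalgebra. The observation that drives everything is that, since $\alpha\beta=\beta\alpha$, for homogeneous $x,y,z$ one has
\[
[\beta(x),\beta(y),\alpha(z)]_{\alpha,\beta}=[\alpha\beta(x),\alpha\beta(y),\beta\alpha(z)]=[\alpha\beta(x),\alpha\beta(y),\alpha\beta(z)],
\]
that is, the twisted bracket evaluated on a triple of the special form $(\beta(x),\beta(y),\alpha(z))$ is just the original $3$-Lie superalgebra bracket evaluated on $(\alpha\beta(x),\alpha\beta(y),\alpha\beta(z))$; degrees are unchanged because $\alpha$ and $\beta$ are even.

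First I would dispatch the two skew-symmetry axioms: by the identity above, the first one becomes $[\alpha\beta(x),\alpha\beta(y),\alpha\beta(z)]=-(-1)^{|x||y|}[\alpha\beta(y),\alpha\beta(x),\alpha\beta(z)]$, which is exactly the first skew-symmetry of $(\mathfrak{g},[\cdot,\cdot,\cdot])$ applied to the homogeneous elements $\alpha\beta(x),\alpha\beta(y),\alpha\beta(z)$, and the second one follows the same way from $[a,b,c]=-(-1)^{|b||c|}[a,c,b]$. Next, the multiplicativity conditions: $\alpha\circ\beta=\beta\circ\alpha$ is assumed, and since $\alpha,\beta$ are commuting morphisms of $[\cdot,\cdot,\cdot]$ we get $\alpha([x,y,z]_{\alpha,\beta})=[\alpha^2(x),\alpha^2(y),\alpha\beta(z)]=[\alpha^2(x),\alpha^2(y),\beta\alpha(z)]=[\alpha(x),\alpha(y),\alpha(z)]_{\alpha,\beta}$, and in the same manner $\beta([x,y,z]_{\alpha,\beta})=[\beta(x),\beta(y),\beta(z)]_{\alpha,\beta}$.

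The substantial step is the $3$-BiHom-super-Jacobi identity \eqref{BiHom-Nambu}. Writing out the inner bracket of the left-hand side, then applying $\beta$ (a morphism) to it and using $\alpha\beta=\beta\alpha$, the left-hand side reduces to $[a,b,[c,d,e]]$, where I abbreviate $a=\alpha\beta^2(x)$, $b=\alpha\beta^2(y)$, $c=\alpha\beta^2(z)$, $d=\alpha\beta^2(u)$, $e=\alpha\beta^2(v)$; the very same reduction turns the three summands on the right-hand side of \eqref{BiHom-Nambu} into $[d,e,[a,b,c]]$, $[c,e,[a,b,d]]$ and $[c,d,[a,b,e]]$, each carrying the prescribed sign. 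I would then apply the Filippov (fundamental) identity of $(\mathfrak{g},[\cdot,\cdot,\cdot])$ to $[a,b,[c,d,e]]$, obtaining $[[a,b,c],d,e]+(-1)^{|z|(|x|+|y|)}[c,[a,b,d],e]+(-1)^{(|z|+|u|)(|x|+|y|)}[c,d,[a,b,e]]$, and finally move the inner bracket into the last slot in the first two summands via skew-symmetry, using $[[a,b,c],d,e]=(-1)^{(|x|+|y|+|z|)(|u|+|v|)}[d,e,[a,b,c]]$ and $[c,[a,b,d],e]=-(-1)^{(|x|+|y|+|u|)|v|}[c,e,[a,b,d]]$.

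The hard part is nothing conceptual but the $\mathbb{Z}_2$-exponent bookkeeping in this last step: one must check that the exponents produced by these two transpositions, combined with the factor $(-1)^{|z|(|x|+|y|)}$ coming from the fundamental identity, reproduce exactly $(-1)^{(|u|+|v|)(|x|+|y|+|z|)}$, $-(-1)^{(|z|+|v|)(|x|+|y|)+|u||v|}$ and $(-1)^{(|z|+|u|)(|x|+|y|)}$ respectively. Granting that routine check, the three summands match term by term, \eqref{BiHom-Nambu} holds, and together with the previous steps this proves that $(\mathfrak{g},[\cdot,\cdot,\cdot]_{\alpha,\beta},\alpha,\beta)$ is a $3$-BiHom-Lie superalgebra.
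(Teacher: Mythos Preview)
Your proof is correct and follows essentially the same approach as the paper: both reduce the nested twisted brackets to the original $3$-Lie superalgebra bracket evaluated on $\alpha\beta^2$-twisted elements (the paper writes this as $\alpha\beta^2\big([x,y,[z,t,u]]\big)$), whence the $3$-BiHom-super-Jacobi identity follows from the Filippov identity of $(\mathfrak g,[\cdot,\cdot,\cdot])$. Your write-up is more explicit than the paper's---you additionally verify multiplicativity and spell out the sign bookkeeping---but the underlying argument is the same.
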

\begin{proof}~

It is easy to see that, $\forall x,y,z\in\mathcal{H(\mathfrak{g})}$ we are :\\

$[\beta(x),\beta(y),\alpha(z)]_{\alpha,\beta}=-(-1)^{|x||y|}[\beta(y),\beta(x),\alpha(z)]_{\alpha,\beta}$\\

 and
$[\beta(x),\beta(y),\alpha(z)]_{\alpha,\beta}=-(-1)^{|y||z|}[\beta(x),\beta(z),\alpha(y)]_{\alpha,\beta}$.\\

Let $x,y,z,t,u\in\mathcal{H(\mathfrak{g})}$,we have\\
\begin{eqnarray*}
[\beta^2(x),\beta^2(y),[\beta(z),\beta(t),\alpha(u)]_{\alpha,\beta}]_{\alpha,\beta}&=&
[\alpha\beta^2(x),\alpha\beta^2(y),[\beta^2(z),\beta^2(t),\alpha\beta(u)]_{\alpha,\beta}]\\
&=&[\alpha\beta^2(x),\alpha\beta^2(y),[\alpha\beta^2(z),\alpha\beta^2(t),\alpha\beta^2(u)]]\\
&=&\alpha\beta^2\Big([x,y,[z,t,u]]\Big),
\end{eqnarray*}
then the $3$-BiHom-super-Jacobi identity it satisfies.
\end{proof}
\section{ $3$-BiHom-Lie Superalgebras Induced by BiHom-Lie
Superalgebras}
Now we generalize the result given in \cite{Guan&Chen&Sun} to super case. Given a nonmultplicative BiHom-Lie superalgebra
$(\mathfrak g, [\cdot,\cdot],\alpha,\beta)$ and a linear form $\tau:\mathfrak g\rightarrow K$. 
 For any $x_1, x_2,x_3 \in\mathcal{H}(\mathfrak g)$, we define
the 3-ary bracket by
\begin{equation}\label{crochet_n}
[x_1,x_2,x_3]_\tau=\tau(x_1)[x_2, x_3] - (-1)^{|x_1||x_2|}\tau(x_2)[x_1, x_3]+(-1)^{|x_3|(|x_1|+|x_2|)}\tau(x_3)[x_1, x_2].
\end{equation}

\begin{thm}

Let $(\mathfrak g, [\cdot, \cdot], \alpha,\beta)$ be a nonmultiplicative BiHom-Lie superalgebra. 
 If the conditions
\begin{eqnarray}
  \tau([x,y])=0 &\text{and}&\tau(x)\tau(\beta(y)) =\tau(y)\tau(\beta(x)), \label{ConditionTau1}\\
  \tau(\alpha(x))\beta(y)&=&\tau(\beta(x))\alpha(y),\label{ConditionTau2}
   \end{eqnarray}
are satisfied for any $x,y\in\mathfrak{g}$, then $(\mathfrak g, [\cdot, \cdot, \cdot]_\tau, \alpha,\beta)$ is a nonmultiplicative 3-BiHom-Lie superalgebra.\\
We say that $(\mathfrak g, [\cdot,\cdot,\cdot]_\tau, \alpha, \beta)$ induced by $(\mathfrak g, [\cdot, \cdot], \alpha,\beta)$, it is denoted by $\mathfrak g_\tau$.

\end{thm}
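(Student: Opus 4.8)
The plan is to verify the three defining identities of a nonmultiplicative 3-BiHom-Lie superalgebra for the bracket $[\cdot,\cdot,\cdot]_\tau$ defined in \eqref{crochet_n}, using only the hypotheses \eqref{ConditionTau1}, \eqref{ConditionTau2} together with the BiHom-Lie superalgebra axioms \eqref{BiHom-skewsupersymmetry} and \eqref{BiHomSuperJacobi}. First I would establish the two skew-symmetry conditions. For the first, $[\beta(x),\beta(y),\alpha(z)]_\tau = -(-1)^{|x||y|}[\beta(y),\beta(x),\alpha(z)]_\tau$: expand both sides via \eqref{crochet_n}, and match the three terms on each side. The comparison reduces to identities of the form $\tau(\beta(x))[\beta(y),\alpha(z)] = -(-1)^{|y||z|}\cdots$, which will follow after using \eqref{BiHom-skewsupersymmetry} on the inner bracket $[\beta(y),\alpha(z)]$ and the commutation relation $\tau(\alpha(x))\beta(y)=\tau(\beta(x))\alpha(y)$ from \eqref{ConditionTau2} to move the twisting maps to the correct slots; the sign bookkeeping with $(-1)^{|x||y|}$ factors is routine. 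The second skew-symmetry, $[\beta(x),\beta(y),\alpha(z)]_\tau = -(-1)^{|y||z|}[\beta(x),\beta(z),\alpha(y)]_\tau$, is handled the same way, again invoking \eqref{BiHom-skewsupersymmetry} inside each $[\cdot,\cdot]$-term and \eqref{ConditionTau2} to align arguments.

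The substantial part is the 3-BiHom-super-Jacobi identity \eqref{BiHom-Nambu}. I would substitute \eqref{crochet_n} into every occurrence of the ternary bracket on both sides, producing on the left-hand side a sum coming from $[\beta^2(x),\beta^2(y),\tau(\beta(z))[\beta(u),\alpha(v)] - \cdots]_\tau$; expanding the outer bracket and then each inner binary bracket of a binary bracket, one gets a sum of terms each of the form (scalar in the $\tau$'s) $\cdot$ (double binary bracket such as $[\beta^2(x),[\beta(u),\alpha(v)]]$, up to reshuffling of the $\beta$, $\alpha$ twists). The condition $\tau([x,y])=0$ from \eqref{ConditionTau1} kills every term in which $\tau$ is applied to a binary bracket, so only the genuinely "nested" terms survive. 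The remaining scalar coefficients are products $\tau(\cdot)\tau(\cdot)$, and the relation $\tau(x)\tau(\beta(y))=\tau(y)\tau(\beta(x))$ together with \eqref{ConditionTau2} (to push all twisting maps onto a single canonical form so that \eqref{BiHomSuperJacobi} becomes applicable) lets one recognize the surviving sum as $\tau(\cdot)\tau(\cdot)$ times the left-hand side of the BiHom-super-Jacobi identity \eqref{BiHomSuperJacobi} for $(\mathfrak g,[\cdot,\cdot],\alpha,\beta)$, hence zero; doing the same expansion on the right-hand side of \eqref{BiHom-Nambu} produces exactly the matching collection of terms.

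I expect the main obstacle to be purely organizational: after full expansion each side of \eqref{BiHom-Nambu} is a sum of roughly a dozen terms, and one must check that the $\mathbb Z_2$-degree signs $(-1)^{\cdots}$ produced by commuting $\tau$ past homogeneous elements and by the skew-symmetries of $[\cdot,\cdot]$ match term by term. A clean way to control this is to first rewrite \eqref{BiHom-Nambu} in the equivalent cyclic form given in the Remark following its definition, and to fix once and for all a normal ordering of arguments inside each surviving nested bracket (say, always of the shape $[\beta^2(\text{arg}),[\beta(\text{arg}),\alpha(\text{arg})]]$) using \eqref{ConditionTau2}; then the identity \eqref{BiHomSuperJacobi}, applied to the appropriate triple of arguments, closes the computation. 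No step requires anything beyond the stated hypotheses, so the theorem follows.
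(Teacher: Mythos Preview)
Your proposal is correct and follows the same strategy as the paper: verify the two skew-symmetries by direct expansion (using \eqref{ConditionTau2} to align the $\alpha/\beta$ twists so that \eqref{BiHom-skewsupersymmetry} applies), then for the 3-BiHom-super-Jacobi identity expand both sides via \eqref{crochet_n}, kill the terms where $\tau$ lands on a bracket using $\tau([x,y])=0$, and group the survivors by their scalar $\tau(\cdot)\tau(\cdot)$ coefficients into expressions that reduce to instances of \eqref{BiHomSuperJacobi}. The paper carries this out by writing $L-R=\sum_{i=1}^{6}\tau(\beta(y_k))\tau(\beta^{2}(x_j))\,\Omega_i$ and then invokes a parity observation you leave implicit in your ``sign bookkeeping'': since $\tau$ is an even linear form it vanishes on odd elements, so each nonzero coefficient forces the two indexed elements to be even, which collapses the offending sign exponents and makes each $\Omega_i$ literally an instance of \eqref{BiHomSuperJacobi}.
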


\begin{proof}

For any $x_1, x_2, x_3 \in \mathcal{H}(\mathfrak g)$, we have
{\small\begin{eqnarray*}
[\beta(x_1),\beta(x_2),\alpha(x_3)]_\tau&=&\tau(\beta(x_1))[\beta(x_1), \alpha(x_3)] - (-1)^{|x_1||x_2|}\tau(\beta(x_2))[\beta(x_1),\alpha(x_3)]\\
&+&(-1)^{|x_3|(|x_1|+|x_2|)}\tau(\alpha(x_3))[\beta(x_1), \beta(x_2)]\\
&=&-(-1)^{|x_1||x_2|}\Big(\tau(\beta(x_2))[\beta(x_1),\alpha(x_3)]- (-1)^{|x_1||x_2|}\tau(\beta(x_1))[\beta(x_2),\alpha(x_3)]\\
&-&(-1)^{|x_3|(|x_1|+|x_2|)+|x_1||x_2|}\tau(\alpha(x_3))[\beta(x_1), \beta(x_2)]\Big)\\
&=&-(-1)^{|x_1||x_2|}\Big(\tau(\beta(x_2))[\beta(x_1),\alpha(x_3)]- (-1)^{|x_1||x_2|}\tau(\beta(x_1))[\beta(x_2),\alpha(x_3)]\\
&-&(-1)^{|x_3|(|x_1|+|x_2|)}\tau(\alpha(x_3))[\beta(x_2), \beta(x_1)]\Big)\\
&=&-(-1)^{|x_1||x_2|}[\beta(x_2),\beta(x_1),\alpha(x_3)]_\tau.
\end{eqnarray*}}
Similarly, one gets $[\beta(x_1),\beta(x_2),\alpha(x_3)]_\tau=-(-1)^{|x_2||x_3|}[\beta(x_1),\beta(x_3),\alpha(x_2)]_\tau$.\\


We next show that $[\cdot,\cdot,\cdot]_{\tau}$ is satisfies the $3$-BiHom-super-Jacobi identity.\\
Let $L$ be its left-hand side, and $R$ its right-hand side of the identity \eqref{BiHom-Nambu}. Using the definition of $[\cdot,\cdot,\cdot]_{\tau}$, the equations \eqref{ConditionTau1} and \eqref{ConditionTau2}, we have
{\small\begin{align*}
L=&[\beta^2(x_1),\beta^2(x_2),[\beta(y_1),\beta(y_2),\alpha(y_3)]_\tau]_\tau\\
=&\tau(\beta(y_1))[\beta^2(x_1),\beta^2(x_2),[\beta(y_2),\alpha(y_3)]]_\tau
-(-1)^{|y_1||y_2|}\tau(\beta(y_2))[\beta^2(x_1),\beta^2(x_2),[\beta(y_1),\alpha(y_3)]]_\tau\\
&+(-1)^{|y_3|(|y_1|+|y_2|)}\tau(\beta(y_3))[\beta^2(x_1),\beta^2(x_2),[\beta(y_1),\alpha(y_2)]]_\tau\\
=&\tau(\beta(y_1))\{\tau(\beta^2(x_1))[\beta^2(x_2),[\beta(y_2),\alpha(y_3)]]
-(-1)^{|x_1||x_2|}\tau(\beta^2(x_2))[\beta^2(x_1),[\beta(y_2),\alpha(y_3)]]\}\\
&-(-1)^{|y_1||y_2|}\tau(\beta(y_2))\{\tau(\beta^2(x_1))[\beta^2(x_2),[\beta(y_1),\alpha(y_3)]]\\
&-(-1)^{|x_1||x_2|}\tau(\beta^2(x_2))[\beta^2(x_1),[\beta(y_1),\alpha(y_3)]]\}\\
&+(-1)^{|y_3|(|y_1|+|y_2|)}\tau(\beta(y_3))\{\tau(\beta^2(x_1))[\beta^2(x_2),[\beta(y_1),\alpha(y_2)]]\\
&-(-1)^{|x_1||x_2|}\tau(\beta^2(x_2))[\beta^2(x_1),[\beta(y_1),\alpha(y_2)]]\}\\
=&\tau(\beta(y_1))\tau(\beta^2(x_1))[\beta^2(x_2),[\beta(y_2),\alpha(y_3)]]
-(-1)^{|x_1||x_2|}\tau(\beta(y_1))\tau(\beta^2(x_2))[\beta^2(x_1),[\beta(y_2),\alpha(y_3)]]\\
&-(-1)^{|y_1||y_2|}\tau(\beta(y_2))\tau(\beta^2(x_1))[\beta^2(x_2),[\beta(y_1),\alpha(y_3)]]\\
&+(-1)^{|x_1||x_2|+|y_1||y_2|}\tau(\beta(y_2))\tau(\beta^2(x_2))[\beta^2(x_1),[\beta(y_1),\alpha(y_3)]]\\
&+(-1)^{|y_3|(|y_1|+|y_2|)}\tau(\beta(y_3))\tau(\beta^2(x_1))[\beta^2(x_2),[\beta(y_1),\alpha(y_2)]]\\
&-(-1)^{|x_1||x_2|}(-1)^{|y_3|(|y_1|+|y_2|)}\tau(\beta(y_3))\tau(\beta^2(x_2))[\beta^2(x_1),[\beta(y_1),\alpha(y_2)]]\\
\end{align*}}
And
{\small\begin{align*}
R=&(-1)^{(|x_1|+|x_2|+|y_1|)(|y_2|+|y_3|)}[\beta^2(y_2),\beta^2(y_3),[\beta(x_1),\beta(x_2),\alpha(y_1)]_{\tau}]_{\tau}\\
&-(-1)^{(|x_1|+|x_2|)(|y_1|+|y_3|)}(-1)^{|y_2||y_3|}[\beta^2(y_1),\beta^2(y_3),[\beta(x_1),\beta(x_2),\alpha(y_2)]_{\tau}]_{\tau}\\
&+(-1)^{(|x_1|+|x_2|)(|y_1|+|y_2|)}[\beta^2(y_1),\beta^2(y_2),[\beta(x_1),\beta(x_2),\alpha(y_3)]_{\tau}]_{\tau}\\
=&(-1)^{(|x_1|+|x_2|+|y_1|)(|y_2|+|y_3|)}\tau(\beta(x_1))\tau(\beta^2(y_2))[\beta^2(y_3),[\beta(x_2),\alpha(y_1)]]\\
&-(-1)^{(|x_1|+|x_2|+|y_1|)(|y_2|+|y_3|)}(-1)^{|y_2||y_3|}\tau(\beta(x_1))\tau(\beta^2(y_3))[\beta^2(y_2),[\beta(x_2),\alpha(y_1)]]\\
&-(-1)^{(|x_1|+|x_2|+|y_1|)(|y_2|+|y_3|)}(-1)^{|x_1||x_2|}\tau(\beta(x_2))\tau(\beta^2(y_2))[\beta^2(y_3),[\beta(x_1),\alpha(y_1)]]\\
&+(-1)^{(|x_1|+|x_2|+|y_1|)(|y_2|+|y_3|)}(-1)^{|x_1||x_2|+|y_2||y_3|}\tau(\beta(x_2))\tau(\beta^2(y_3))[\beta^2(y_2),[\beta(x_1),\alpha(y_1)]]\\
&+(-1)^{(|x_1|+|x_2|+|y_1|)(|y_2|+|y_3|)}(-1)^{|y_1|(|x_1|+|x_2|)}\tau(\beta(y_1))\tau(\beta^2(y_2))[\beta^2(y_3),[\beta(x_1),\alpha(x_2)]]\\
&-(-1)^{(|x_1|+|x_2|+|y_1|)(|y_2|+|y_3|)}(-1)^{|y_1|(|x_1|+|x_2|)+|y_2||y_3|}\tau(\beta(y_1))\tau(\beta^2(y_3))[\beta^2(y_2),[\beta(x_1),\alpha(x_2)]]\\
&-(-1)^{(|x_1|+|x_2|)(|y_1|+|y_3|)}(-1)^{|y_2||y_3|}\tau(\beta(x_1))\tau(\beta^2(y_1))[\beta^2(y_3),[\beta(x_2),\alpha(y_2)]]\\
&+(-1)^{(|x_1|+|x_2|)(|y_1|+|y_3|)}(-1)^{|y_3|(|y_1|+|y_2|)}\tau(\beta(x_1))\tau(\beta^2(y_3))[\beta^2(y_1),[\beta(x_2),\alpha(y_2)]]\\
&+(-1)^{(|x_1|+|x_2|)(|y_1|+|y_3|)}(-1)^{|y_2||y_3|+|x_1||x_2|}\tau(\beta(x_2))\tau(\beta^2(y_1))[\beta^2(y_3),[\beta(x_1),\alpha(y_2)]]\\
&-(-1)^{(|x_1|+|x_2|)(|y_1|+|y_3|)}(-1)^{|y_3|(|y_1|+|y_2|)+|x_1||x_2|}\tau(\beta(x_2))\tau(\beta^2(y_3))[\beta^2(y_1),[\beta(x_1),\alpha(y_2)]]\\
&-(-1)^{(|x_1|+|x_2|)(|y_1|+|y_2|+|y_3|)}(-1)^{|y_3||y_2|}\tau(\beta(y_2))\tau(\beta^2(y_1))[\beta^2(y_3),[\beta(x_1),\alpha(x_2)]]\\
&+(-1)^{(|x_1|+|x_2|)(|y_1|+|y_2|+|y_3|)}(-1)^{|y_3|(|y_1|+|y_2|)}\tau(\beta(y_2))\tau(\beta^2(y_3))[\beta^2(y_1),[\beta(x_1),\alpha(x_2)]]\\
&+(-1)^{(|x_1|+|x_2|)(|y_1|+|y_2|)}\tau(\beta(x_1))\tau(\beta^2(y_1))[\beta^2(y_2),[\beta(x_2),\alpha(y_3)]]\\
&-(-1)^{(|x_1|+|x_2|)(|y_1|+|y_2|)+|y_1||y_2|}\tau(\beta(x_1))\tau(\beta^2(y_2))[\beta^2(y_1),[\beta(x_2),\alpha(y_3)]]\\
&-(-1)^{(|x_1|+|x_2|)(|y_1|+|y_2|)+|x_1||x_2|}\tau(\beta(x_2))\tau(\beta^2(y_1))[\beta^2(y_2),[\beta(x_1),\alpha(y_3)]\\
&+(-1)^{(|x_1|+|x_2|)(|y_1|+|y_2|)}(-1)^ {(|x_1||x_2|+|y_1||y_2|)}\tau(\beta(x_2))\tau(\beta^2(y_2))[\beta^2(y_1),[\beta(x_1),\alpha(y_3)]]\\
&+(-1)^{(|x_1|+|x_2|)(|y_1|+|y_2|+|y_3|)}\tau(\beta(y_3))\tau(\beta^2(y_1))[\beta^2(y_2),[\beta(x_1),\alpha(x_2)]]\\
&-(-1)^{(|x_1|+|x_2|)(|y_1|+|y_2|+|y_3|)}(-1)^ {|y_1||y_2|}\tau(\beta(y_3))\tau(\beta^2(y_2))[\beta^2(y_1),[\beta(x_1),\alpha(x_2)]].
\end{align*}}
By the identity of BiHom-super-Jacobi and the equation \eqref{ConditionTau1}   we get \\
{\small\begin{align*}
L-R=&\tau(\beta(y_1))\tau(\beta^2(x_1))\Omega_1+\tau(\beta(y_1))\tau(\beta^2(x_2))\Omega_2+\tau(\beta(y_2))\tau(\beta^2(x_1))\Omega_3\\
&+\tau(\beta(y_2))\tau(\beta^2(x_2))\Omega_4+\tau(\beta(y_3))\tau(\beta^2(x_1))\Omega_5+\tau(\beta(y_3))\tau(\beta^2(x_2))\Omega_6.
\end{align*}}
Where
{\small\begin{align*}
\Omega_1=&[\beta^2(x_2),[\beta(y_2),\alpha(y_3)]]+(-1)^{(|x_1|+|x_2|)(|y_1|+|y_3|)}(-1)^{|y_2||y_3|}[\beta^2(y_3),[\beta(x_2),\alpha(y_2)]]\\
&-(-1)^{(|x_1|+|x_2|)(|y_1|+|y_2|)}(-1)^{|y_1||y_2|}[\beta^2(y_2),[\beta(x_2),\alpha(y_3)]]\\
\Omega_2=&-(-1)^{|x_1||x_2|}[\beta^2(x_1),[\beta(y_2),\alpha(y_3)]]\\
&-(-1)^{(|x_1|+|x_2|)(|y_1|+|y_3|)}(-1)^{|x_1||x_2|+|y_2||y_3|}[\beta^2(y_3),[\beta(x_1),\alpha(y_2)]]\\
&+(-1)^{(|x_1|+|x_2|)(|y_1|+|y_2|)}(-1)^{|x_1||x_2|}[\beta^2(y_2),[\beta(x_1),\alpha(y_3)]]\\
\Omega_3=&-(-1)^{|y_1||y_2|}[\beta^2(x_2),[\beta(y_1),\alpha(y_3)]]-(-1)^{(|x_1|+|x_2|+|y_1|)(|y_2|+|y_3|)}[\beta^2(y_3),[\beta(x_2),\alpha(y_1)]]\\
&+(-1)^{(|x_1|+|x_2|)(|y_1|+|y_2|)}(-1)^{|y_1||y_2|}[\beta^2(y_1),[\beta(x_2),\alpha(y_3)]]\\
\Omega_4=&(-1)^{|x_1||x_2|}(-1)^{|y_1||y_2|}[\beta^2(x_1),[\beta(y_1),\alpha(y_3)]]\\
&+(-1)^{(|x_1|+|x_2|+|y_1|)(|y_2|+|y_3|)}(-1)^{|x_1||x_2|}[\beta^2(y_3),[\beta(x_1),\alpha(y_1)]]\\
&-(-1)^{(|x_1|+|x_2|)(|y_1|+|y_2|)}(-1)^{|x_1||x_2|+|y_1||y_2|}[\beta^2(y_1),[\beta(x_1),\alpha(y_3)]]\\
\Omega_5=&(-1)^{|y_3|(|y_1|+|y_2|)}[\beta^2(x_2),[\beta(y_1),\alpha(y_2)]]\\
&+(-1)^{(|x_1|+|x_2|+|y_1|)(|y_2|+|y_3|)}(-1)^{|y_2||y_3|}[\beta^2(y_2),[\beta(x_2),\alpha(y_1)]]\\
&-(-1)^{(|x_1|+|x_2|)(|y_1|+|y_3|)}(-1)^{|y_3|(|y_1|+|y_2|)}[\beta^2(y_1),[\beta(x_2),\alpha(y_1)]]\\
\Omega_6=&-(-1)^{|y_3|(|y_1|+|y_2|)}(-1)^ {|x_1||x_2|}[\beta^2(x_1),[\beta(y_1),\alpha(y_2)]]\\
&-(-1)^{(|x_1|+|x_2|+|y_1|)(|y_2|+|y_3|)}(-1)^{|x_1||x_2|+|y_1||y_2|}[\beta^2(y_2),[\beta(x_1),\alpha(y_1)]]\\
&+(-1)^{(|x_1|+|x_2|)(|y_1|+|y_3|)}(-1)^{|y_3|(|y_1|+|y_2|)+|x_1||x_2|}[\beta^2(y_1),[\beta(x_1),\alpha(y_2)]].\\
\end{align*}}
If $x_1$ or $y_1$ is odd, then $\tau(\beta(y_1))\tau(\beta^2(x_1))=0$ which gives $\tau(\beta(y_1))\tau(\beta^2(x_1))\Omega_1=0$.\\
If $x_1$ and $y_1$ are even, then:\\
\begin{eqnarray*}
\Omega_1&=&[\beta^2(x_2),[\beta(y_2),\alpha(y_3)]]+(-1)^{|x_2||y_2|+|y_2||y_3|}[\beta^2(y_3),[\beta(x_2),\alpha(y_2)]]\\
&-&(-1)^{|x_2||y_2|}[\beta^2(y_2),[\beta(x_2),\alpha(y_3)]]\\
&=&[\beta^2(x_2),[\beta(y_2),\alpha(y_3)]]+(-1)^{|y_3|(|x_2|+|y_2|)}[\beta^2(y_3),[\beta(x_2),\alpha(y_2)]]\\
&+&(-1)^{|x_2|(|y_2|+|y_3|)}[\beta^2(y_2),[\beta(y_3),\alpha(x_2)]]\\
&=&0,
\end{eqnarray*}
since the identity of BiHom-super-Jacobi and equation \eqref{BiHom-skewsupersymmetry}. So $\tau(\beta(y_1))\tau(\beta^2(x_1))\Omega_1=0$.\\
For the same reason we show that each $\Omega_i$, where $1\leq i\leq6$, multiplied by their coefficient is zero. From where $L-R=0$. The theorem is proved.
\end{proof}

\section{Derivations and $(\alpha^{s},\beta^{r}) $-derivations of $3$-BiHom-Lie superalgebras }\label{Deriv}
In this section, we introduce the notion of $(\alpha^{s},\beta^{r}) $-derivations of $3$-BiHom-Lie superalgebras generalize the notion of  $(\alpha^{s},\beta^{r}) $-derivations of $3$-BiHom-Lie algebras introduced in \cite{Ab&Elh&Kay&Mak} and we give some results.
\begin{defn}
	Let $ (\mathfrak{g}, [\cdot,\cdot,\cdot], \alpha, \beta ) $ be a $ 3 $-BiHom-Lie superalgebra. A  linear map  $\mathfrak{D}: \mathfrak{g} \rightarrow \mathfrak{g}$ is  a derivation if it satisfait  for all  $x,y,z \in \mathcal{H}(\mathfrak{g})$ :
	\begin{eqnarray}
&&\mathfrak{D}\circ\alpha=\alpha\circ\mathfrak{D},\ \ \mathfrak{D}\circ\beta=\beta\circ\mathfrak{D},\nonumber\\
	\mathfrak{D}([x,y,z])&=&[\mathfrak{D}(x),y,z]+ (-1)^{|x||\mathfrak{D}|}[x,\mathfrak{D}(y),z]+(-1)^{|\mathfrak{D}|(|x|+|y|)}[x,y,\mathfrak{D}(z)] ,
	\end{eqnarray}
	and it is called   an $(  \alpha^{s},\beta^{r}) $-derivation of $ (\mathfrak{g}, [\cdot,\cdot,\cdot], \alpha, \beta ) $, if it satisfies :
\begin{eqnarray}	&&\mathfrak{D}\circ\alpha=\alpha\circ\mathfrak{D},\ \ \mathfrak{D}\circ\beta=\beta\circ\mathfrak{D},\nonumber\\	 \mathfrak{D}([x,y,z])&=&[\mathfrak{D}(x),\alpha^{s}\beta^{r}(y),\alpha^{s}\beta^{r}(z)]+(-1)^{|x||\mathfrak{D}|} [\alpha^{s}\beta^{r}(x),\mathfrak{D}(y),\alpha^{s}\beta^{r}(z)] \nonumber\\
	&&+(-1)^{|\mathfrak{D}|(|x|+|y|)}[\alpha^{s}\beta^{r}(x),\alpha^{s}\beta^{r}(y),\mathfrak{D}(z)]\label{DRS} .
	\end{eqnarray}
\end{defn}


Let   ${\rm Der}_{(  \alpha^{s},\beta^{r})}(\mathfrak{g})$ be the set of $(  \alpha^{s},\beta^{r}) $-derivations of $\mathfrak{g}$ and
set \[{\rm Der}(\mathfrak{g}):=\bigoplus_{s ,r\geq 0} {\rm Der}_{(  \alpha^{s},\beta^{r})}(\mathfrak{g}).\]

We show that ${\rm Der}(\mathfrak{g})$  is equipped with a  Lie superalgebra structure. In fact, for $ \mathfrak{D} \in {\rm Der}_{(  \alpha^{s},\beta^{r})}(\mathfrak{g}) $ and $ \mathfrak{D}' \in {\rm Der}_{(  \alpha^{s'},\beta^{r'})}(\mathfrak{g}) $, we have $ [\mathfrak{D},\mathfrak{D}'] \in {\rm Der}_{(  \alpha^{s+s'},\beta^{r+r'})}(\mathfrak{g}) $ , where $ [\mathfrak{D},\mathfrak{D}'] $ is the standard supercommutator defined by
$[\mathfrak{D},\mathfrak{D}'] = \mathfrak{D}\mathfrak{D}'-(-1)^{|\mathfrak{D}||\mathfrak{D}'|}\mathfrak{D}'\mathfrak{D} . $
\begin{defn}
	An endomorphism $ \mathfrak{D}$ of   a $ 3$-BiHom-Lie superalgebra $\mathfrak{g}$  is called  $(  \alpha^{s},\beta^{r}) $-quasiderivation if there exists an endomorphism  $ \mathfrak{D}' $ of $\mathfrak{g}$ such that
	\begin{align*}
	& \mathfrak{D} \circ \alpha = \alpha \circ \mathfrak{D};\  \mathfrak{D} \circ \beta = \beta \circ \mathfrak{D} \\
	& \mathfrak{D}' \circ \alpha = \alpha \circ \mathfrak{D}';\  \mathfrak{D}' \circ \beta = \beta \circ \mathfrak{D}'.
	\end{align*}
\begin{eqnarray}
	\mathfrak{D'}([x,y,z])&=&[\mathfrak{D}(x),\alpha^{s}\beta^{r}(y),\alpha^{s}\beta^{r}(z)]+ (-1)^{|x||\mathfrak{D}|}[\alpha^{s}\beta^{r}(x),\mathfrak{D}(y),\alpha^{s}\beta^{r}(z)]\\
&&+(-1)^{|\mathfrak{D}|(|x|+|y|)}[\alpha^{s}\beta^{r}(x),\nonumber
\alpha^{s}\beta^{r}(y),\mathfrak{D}(z)],
	\end{eqnarray}
	for any $ x,y,z\in \mathfrak{g}. $
\end{defn}
Then we define
\[{\rm QDer}(\mathfrak{g}):=\bigoplus_{s,r \geq 0} {\rm QDer}_{(  \alpha^{s},\beta^{r})}(\mathfrak{g}).\]

\begin{prop}\label{AAA}
	Let $ (\mathfrak{g}, [\cdot,\cdot], \alpha, \beta ) $ be a nonmultiplicative BiHom-Lie superalgebra.   Let $\mathfrak{D}: \mathfrak{g} \rightarrow \mathfrak{g}$ be an $(\alpha^s,\beta^r)$-derivation of $ (\mathfrak{g}, [\cdot,\cdot], \alpha, \beta ) $.  If the following identity holds, for all $ x,y,z \in \mathcal{H}(\mathfrak{g}),$
	$$\circlearrowleft_{x,y,z}(-1)^{|x||z|} \tau(\mathfrak{D}(x))[y, z]
=0\;\;\;and\;\;\;\tau \circ \alpha^s\beta^r = \tau,$$
	then  $\mathfrak{D}$ is an $(\alpha^s,\beta^r)$-derivation of the induced nonmultiplicative $3$-BiHom-Lie superalgebra $ \mathfrak{g}_\tau $.
\end{prop}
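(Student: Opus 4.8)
The plan is to check the two clauses in the definition of an $(\alpha^s,\beta^r)$-derivation of $\mathfrak g_\tau$ directly from formula \eqref{crochet_n}. The commutation relations $\mathfrak D\circ\alpha=\alpha\circ\mathfrak D$ and $\mathfrak D\circ\beta=\beta\circ\mathfrak D$ are already part of the hypothesis that $\mathfrak D$ is an $(\alpha^s,\beta^r)$-derivation of $(\mathfrak g,[\cdot,\cdot],\alpha,\beta)$, so the only thing to prove is the ternary Leibniz identity \eqref{DRS} for the bracket $[\cdot,\cdot,\cdot]_\tau$.

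First I would expand the left-hand side $\mathfrak D([x,y,z]_\tau)$. Since each $\tau(x_i)$ is a scalar, linearity of $\mathfrak D$ gives $\mathfrak D([x,y,z]_\tau)=\tau(x)\mathfrak D([y,z])-(-1)^{|x||y|}\tau(y)\mathfrak D([x,z])+(-1)^{|z|(|x|+|y|)}\tau(z)\mathfrak D([x,y])$, and applying the binary $(\alpha^s,\beta^r)$-Leibniz rule to each $\mathfrak D([x_j,x_k])$ turns this into six summands, each of the shape ``a value of $\tau$ times a bracket of one $\mathfrak D(x_i)$ with one $\alpha^s\beta^r(x_j)$''. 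Then I would expand the claimed right-hand side $[\mathfrak D(x),\alpha^s\beta^r(y),\alpha^s\beta^r(z)]_\tau+(-1)^{|x||\mathfrak D|}[\alpha^s\beta^r(x),\mathfrak D(y),\alpha^s\beta^r(z)]_\tau+(-1)^{|\mathfrak D|(|x|+|y|)}[\alpha^s\beta^r(x),\alpha^s\beta^r(y),\mathfrak D(z)]_\tau$ via \eqref{crochet_n}, using $\tau\circ\alpha^s\beta^r=\tau$ to collapse $\tau(\alpha^s\beta^r(x_i))$ to $\tau(x_i)$. This yields nine summands: six again of the ``mixed'' shape above, and three carrying a factor $\tau(\mathfrak D(x_i))$ with both bracket entries of the form $\alpha^s\beta^r(\cdot)$.

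The matching is in two parts. The six mixed terms of the right-hand side are paired with the six terms of the left-hand side; within each pair the two expressions coincide up to a sign $(-1)^{|x_i||\mathfrak D|}$, which does no harm because the shared scalar factor is $\tau$ evaluated at that same $x_i$ and $\tau$, being even, annihilates odd homogeneous elements --- so either the sign is $+1$ or both terms vanish. For the three residual terms one uses $\mathfrak D\alpha=\alpha\mathfrak D$, $\mathfrak D\beta=\beta\mathfrak D$ and $\tau\circ\alpha^s\beta^r=\tau$ to rewrite $\tau(\mathfrak D(\alpha^s\beta^r(x_i)))=\tau(\mathfrak D(x_i))$, and then --- after reordering the two bracket entries using \eqref{BiHom-skewsupersymmetry} --- recognizes these three terms as the left-hand side of the first hypothesis evaluated on the triple $(\alpha^s\beta^r(x),\alpha^s\beta^r(y),\alpha^s\beta^r(z))$, which is $0$ by assumption. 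Hence the two sides of \eqref{DRS} coincide and $\mathfrak D\in{\rm Der}_{(\alpha^s,\beta^r)}(\mathfrak g_\tau)$.

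The main obstacle is the Koszul-sign bookkeeping in the second half: one must verify that, after invoking \eqref{BiHom-skewsupersymmetry} and the vanishing of $\tau$ on odd elements, the three residual terms reassemble exactly into the cyclic expression $\circlearrowleft_{x,y,z}(-1)^{|x||z|}\tau(\mathfrak D(x))[y,z]$ (with the $\alpha^s\beta^r$-twisted arguments) and that the six ``derivation'' terms truly cancel in pairs despite the $|\mathfrak D|$-dependent prefactors generated on the two sides. Everything else is a routine expansion, entirely parallel to the non-super $3$-BiHom-Lie algebra case treated in \cite{Ab&Elh&Kay&Mak}.
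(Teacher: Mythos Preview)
Your proposal is correct and follows essentially the same route as the paper's own proof: expand $\mathfrak L=\mathfrak D([x,y,z]_\tau)$ into six terms via the binary $(\alpha^s,\beta^r)$-Leibniz rule, expand the right-hand side $\mathfrak R$ into nine terms via \eqref{crochet_n} and $\tau\circ\alpha^s\beta^r=\tau$, use $\tau|_{\mathfrak g_{\overline 1}}=0$ to match the six mixed terms, and identify the three residual terms of $\mathfrak R-\mathfrak L$ with the cyclic hypothesis evaluated at $(\alpha^s\beta^r(x),\alpha^s\beta^r(y),\alpha^s\beta^r(z))$. The only cosmetic difference is that the paper writes out $\mathfrak L$ and $\mathfrak R$ explicitly before subtracting, whereas you describe the matching schematically.
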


\begin{proof}
	
	We have  to prove that
\begin{eqnarray*}
	\mathfrak{D}[x,y,z]_\tau&=&[\mathfrak{D}(x), \alpha^s\beta^r(y),  \alpha^s\beta^r(z)]_\tau
 +(-1)^{|x||\mathfrak{D}|}[ \alpha^s\beta^r(x), \mathfrak{D}(y),  \alpha^s\beta^r(z) ]_\tau\\&+&
(-1)^{|\mathfrak{D}|(|x|+|y|)} [ \alpha^s\beta^r(x),  \alpha^s\beta^r(y), \mathfrak{D}(z)]_\tau.
\end{eqnarray*}
	By applying $\mathfrak{D}$ to each side of equation (\ref{crochet_n}), we get

	\begin{eqnarray*}
		\mathfrak{L}&=&\mathfrak{D}[x,y,z]_\tau\\
&=& \tau(x)\mathfrak{D}[y,z] -(-1)^{|x||y|}\tau(y)\mathfrak{D}[x,z] +(-1)^{|z|(|x|+|y|)}\tau(z)\mathfrak{D}[x,y] \\
		&=&\tau(x)([\mathfrak{D}(y), \alpha^s\beta^r(z)] +(-1)^{|y||\mathfrak{D}|} [\alpha^s\beta^r(y), \mathfrak{D}(z)])\\
		&-& (-1)^{|x||y|} \tau(y)([\mathfrak{D}(x), \alpha^s\beta^r(z)] -(-1)^{|x|(|y|+|\mathfrak{D}|)} [\alpha^s\beta^r(x), \mathfrak{D}(z)])  \\
		&+&(-1)^{|z|(|x|+|y|)}        \tau(z)([\mathfrak{D}(x), \alpha^s\beta^r(y)]
+(-1)^{|z|(|x|+|y|)+|x||\mathfrak{D}|} [\alpha^s\beta^r(x), \mathfrak{D}(y)]),
	\end{eqnarray*}
	while,
	\begin{eqnarray*}
		 \mathfrak{R} &=& [\mathfrak{D}(x), \alpha^s\beta^r(y),  \alpha^s\beta^r(z)]_\tau
+(-1)^{|x||\mathfrak{D}|}[ \alpha^s\beta^r(x), \mathfrak{D}(y),  \alpha^s\beta^r(z) ]_\tau\\&+&
(-1)^{|\mathfrak{D}|(|x|+|y|)} [ \alpha^s\beta^r(x),  \alpha^s\beta^r(y), \mathfrak{D}(z)]_\tau\\
		&=&\tau(\mathfrak{D}(x))[\alpha^s\beta^r(y), \alpha^s\beta^r(z)] -(-1)^{|y|(|x|+|\mathfrak{D}|)}  \tau(\alpha^s\beta^r(y))[\alpha^s\beta^r(x), \mathfrak{D}(z)]\\&+&(-1)^{|z|(|x|+|y|+|\mathfrak{D}|)}  \tau(\alpha^s\beta^r(z))[\mathfrak{D}(x),\alpha^s\beta^r(y)]
		+(-1)^{|x||\mathfrak{D}|}\tau(\alpha^s\beta^r(x))[\mathfrak{D}(y), \alpha^s\beta^r(z)] \\
&-&(-1)^{|x||y|}  \tau(\mathfrak{D}(y))[\alpha^s\beta^r(x), \alpha^s\beta^r(z)]+ (-1)^{|z|(|x|+|y|)+|\mathfrak{D}|(|x|+|z|)}\tau(\alpha^s\beta^r(z))[\alpha^s\beta^r(x), \mathfrak{D}(y)]
		\\&+&(-1)^{|\mathfrak{D}|(|x|+|y|)}\tau(\alpha^s\beta^r(x))[ \alpha^s\beta^r(y), \mathfrak{D}(z)]
 -(-1)^{|\mathfrak{D}|(|x|+|y|)+|x||y|}  \tau(\alpha^s\beta^r(y))[\alpha^s\beta^r(x), \mathfrak{D}(z)] \\
&+&(-1)^{|z|(|x|+|y|)}  \tau(\mathfrak{D}(z))[\alpha^s\beta^r(x), \alpha^s\beta^r(y)].
	\end{eqnarray*}
	Using  the fact that \[ \tau \circ \alpha^s\beta^r = \tau , \]
	we can rewrite the right hand side as
	\begin{eqnarray*}
		\mathfrak{R} &=&\tau(\mathfrak{D}(x))[\alpha^s\beta^r(y), \alpha^s\beta^r(z)] -(-1)^{|y|(|x|+|\mathfrak{D}|)}  \tau(y)[\alpha^s\beta^r(x), \mathfrak{D}(z)]\\&+&(-1)^{|z|(|x|+|y|+|\mathfrak{D}|)}  \tau(z)[\mathfrak{D}(x),\alpha^s\beta^r(y)]
		+(-1)^{|x||\mathfrak{D}|}\tau(x)[\mathfrak{D}(y), \alpha^s\beta^r(z)] \\
&-&(-1)^{|x||y|}  \tau(\mathfrak{D}(y))[\alpha^s\beta^r(x), \alpha^s\beta^r(z)]+ (-1)^{|z|(|x|+|y|)+
|\mathfrak{D}|(|x|+|z|)}\tau(z)[\alpha^s\beta^r(x), \mathfrak{D}(y)]
		\\&+&(-1)^{|\mathfrak{D}|(|x|+|y|)}\tau(x)[ \alpha^s\beta^r(y), \mathfrak{D}(z)]
 -(-1)^{|\mathfrak{D}|(|x|+|y|)+|x||y|}  \tau(y)[\alpha^s\beta^r(x), \mathfrak{D}(z)] \\
&+&(-1)^{|z|(|x|+|y|)}  \tau(\mathfrak{D}(z))[\alpha^s\beta^r(x), \alpha^s\beta^r(y)].
	\end{eqnarray*}
We use the property $\tau|_{\mathfrak{g}_1}\equiv0$ we find the following equality\\
	\begin{eqnarray*}
		\mathfrak{R}-\mathfrak{L} &=&\tau(\mathfrak{D}(x))[\alpha^s\beta^r(y), \alpha^s\beta^r(z)]
-(-1)^{|x||y|}  \tau(\mathfrak{D}(y))[\alpha^s\beta^r(x), \alpha^s\beta^r(z)] \\
&+&(-1)^{|z|(|x|+|y|)}  \tau(\mathfrak{D}(z))[\alpha^s\beta^r(x), \alpha^s\beta^r(y)]\\
&=&\circlearrowleft_{x,y,z}(-1)^{|x||z|} \tau(\mathfrak{D}(x))[\alpha^s\beta^r(y), \alpha^s\beta^r(z)].
	\end{eqnarray*}
	We then obtain the result by assuming that the following identity holds, $\forall x,y,z \in \mathcal{H}(\mathfrak{g})$,
{\small$$\circlearrowleft_{x,y,z}(-1)^{|x||z|} \tau(\mathfrak{D}(x))[\alpha^s\beta^r(y), \alpha^s\beta^r(z)]=\circlearrowleft_{x,y,z}(-1)^{|x||z|} \tau(\mathfrak{D}(\alpha^s\beta^r(x)))[\alpha^s\beta^r(y), \alpha^s\beta^r(z)]
=0.$$	}
	This ends the proof.
\end{proof}
\begin{prop}\label{AAA}
	Let $ (\mathfrak{g}, [\cdot,\cdot], \alpha, \beta ) $ be a nonmultiplicative
BiHom-Lie superalgebra.   Let $\mathfrak{D}$ be an $(\alpha^s,\beta^r)$-quasiderivation of $ (\mathfrak{g}, [\cdot,\cdot], \alpha, \beta ) $.  If the following identity holds, for all $ x,y,z \in \mathcal{H}(\mathfrak{g}),$
	$$\circlearrowleft_{x,y,z}(-1)^{|x||z|} \tau(\mathfrak{D}(x))[y, z]
=0\;\;\;and\;\;\;\tau \circ \alpha^s\beta^r = \tau$$
	then  $\mathfrak{D}$ is an $(\alpha^s,\beta^r)$-super-quasiderivation of the induced ternary BiHom-Lie superalgebra $ \mathfrak{g}_\tau $.
\end{prop}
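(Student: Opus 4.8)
The plan is to run, essentially line for line, the computation of the preceding proposition, the only difference being that the map applied to the ternary bracket is no longer $\mathfrak{D}$ itself but the companion endomorphism supplied by the quasiderivation hypothesis. By definition, since $\mathfrak{D}$ is an $(\alpha^{s},\beta^{r})$-quasiderivation of the \emph{binary} BiHom-Lie superalgebra $(\mathfrak{g},[\cdot,\cdot],\alpha,\beta)$, there is an endomorphism $\mathfrak{D}'$ of $\mathfrak{g}$ with $\mathfrak{D}'\circ\alpha=\alpha\circ\mathfrak{D}'$, $\mathfrak{D}'\circ\beta=\beta\circ\mathfrak{D}'$ and
\[
\mathfrak{D}'([x,y])=[\mathfrak{D}(x),\alpha^{s}\beta^{r}(y)]+(-1)^{|x||\mathfrak{D}|}[\alpha^{s}\beta^{r}(x),\mathfrak{D}(y)].
\]
I claim the same $\mathfrak{D}'$ witnesses that $\mathfrak{D}$ is an $(\alpha^{s},\beta^{r})$-super-quasiderivation of $\mathfrak{g}_{\tau}$. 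The commutation relations $\mathfrak{D}\circ\alpha=\alpha\circ\mathfrak{D}$, $\mathfrak{D}\circ\beta=\beta\circ\mathfrak{D}$, $\mathfrak{D}'\circ\alpha=\alpha\circ\mathfrak{D}'$, $\mathfrak{D}'\circ\beta=\beta\circ\mathfrak{D}'$ demanded by the definition of a quasiderivation of $\mathfrak{g}_{\tau}$ are inherited verbatim, because $\mathfrak{g}_{\tau}$ carries the very same twisting maps $\alpha,\beta$. So the only identity to verify is
\[
\mathfrak{D}'([x,y,z]_{\tau})=[\mathfrak{D}(x),\alpha^{s}\beta^{r}(y),\alpha^{s}\beta^{r}(z)]_{\tau}+(-1)^{|x||\mathfrak{D}|}[\alpha^{s}\beta^{r}(x),\mathfrak{D}(y),\alpha^{s}\beta^{r}(z)]_{\tau}+(-1)^{|\mathfrak{D}|(|x|+|y|)}[\alpha^{s}\beta^{r}(x),\alpha^{s}\beta^{r}(y),\mathfrak{D}(z)]_{\tau}
\]
for homogeneous $x,y,z\in\mathcal{H}(\mathfrak{g})$.

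First I would compute the left-hand side $\mathfrak{L}:=\mathfrak{D}'([x,y,z]_{\tau})$: applying $\mathfrak{D}'$ to the defining formula \eqref{crochet_n}, pulling $\mathfrak{D}'$ through the scalars $\tau(x),\tau(y),\tau(z)$, and invoking the displayed quasiderivation rule on $\mathfrak{D}'[y,z]$, $\mathfrak{D}'[x,z]$ and $\mathfrak{D}'[x,y]$ yields exactly the six-term expression called $\mathfrak{L}$ in the proof of the preceding proposition, because the quasiderivation rule for $\mathfrak{D}'$ has precisely the shape of the $(\alpha^{s},\beta^{r})$-derivation rule used there. Next I would expand the right-hand side $\mathfrak{R}$ using \eqref{crochet_n} again; here one uses $\tau\circ\alpha^{s}\beta^{r}=\tau$ to replace every $\tau(\alpha^{s}\beta^{r}(\cdot))$ by $\tau(\cdot)$, together with the vanishing $\tau|_{\mathfrak{g}_{1}}\equiv 0$ used there, to bring $\mathfrak{R}$ into the nine-term form appearing in that proof.

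Comparing term by term, the terms of $\mathfrak{R}$ and $\mathfrak{L}$ of mixed type $[\mathfrak{D}(\cdot),\alpha^{s}\beta^{r}(\cdot)]$ cancel pairwise, leaving
\[
\mathfrak{R}-\mathfrak{L}=\circlearrowleft_{x,y,z}(-1)^{|x||z|}\tau(\mathfrak{D}(x))[\alpha^{s}\beta^{r}(y),\alpha^{s}\beta^{r}(z)].
\]
Finally, since $\mathfrak{D}$ commutes with $\alpha^{s}\beta^{r}$ and $\tau\circ\alpha^{s}\beta^{r}=\tau$, this cyclic sum equals $\circlearrowleft_{x,y,z}(-1)^{|x||z|}\tau(\mathfrak{D}(\alpha^{s}\beta^{r}(x)))[\alpha^{s}\beta^{r}(y),\alpha^{s}\beta^{r}(z)]$, which is the image under the algebra morphism $\alpha^{s}\beta^{r}$ of $\circlearrowleft_{x,y,z}(-1)^{|x||z|}\tau(\mathfrak{D}(x))[y,z]$; the latter vanishes by hypothesis, hence $\mathfrak{R}=\mathfrak{L}$ and $\mathfrak{D}$ (with companion $\mathfrak{D}'$) is the desired super-quasiderivation of $\mathfrak{g}_{\tau}$.

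The only genuine work is the sign bookkeeping in the pairwise cancellation of $\mathfrak{R}$ against $\mathfrak{L}$; this is purely routine and proceeds exactly as in the proof of the preceding proposition, to which one simply refers after observing that replacing the left-acting derivation $\mathfrak{D}$ by the companion map $\mathfrak{D}'$ changes none of the combinatorics.
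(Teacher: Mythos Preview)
Your proposal is correct and matches the paper's approach: the paper gives no separate proof for this proposition, implicitly treating it as an immediate variant of the preceding one, and your observation that replacing the left-acting $\mathfrak{D}$ by the companion $\mathfrak{D}'$ leaves the expansions of $\mathfrak{L}$ and $\mathfrak{R}$ literally unchanged is exactly the point. One small caveat on the last step: you justify the vanishing of $\circlearrowleft_{x,y,z}(-1)^{|x||z|}\tau(\mathfrak{D}(\alpha^{s}\beta^{r}(x)))[\alpha^{s}\beta^{r}(y),\alpha^{s}\beta^{r}(z)]$ by calling $\alpha^{s}\beta^{r}$ an ``algebra morphism'', but the proposition is stated for a \emph{nonmultiplicative} BiHom-Lie superalgebra, so $\alpha,\beta$ need not be bracket morphisms; the vanishing follows instead simply because the cyclic-sum hypothesis holds for \emph{all} homogeneous triples, in particular for $(\alpha^{s}\beta^{r}(x),\alpha^{s}\beta^{r}(y),\alpha^{s}\beta^{r}(z))$, which is precisely how the paper's proof of the preceding proposition handles it.
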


\section{Rota-Baxter of  $3$-BiHom-Lie superalgebras}
In this section, we give the definition of Rota-Baxter of  $3$-BiHom-Lie superalgebras and the realizations of Rota-Baxter of $3$-BiHom-Lie superalgebras from Rota-Baxter of BiHom-Lie superalgebras.
\begin{defn}\label{definition and notation}
Let $(\mathfrak{g}, [\cdot,\cdot],\alpha,\beta)$ be a BiHom-Lie superalgebra  and $\lambda\in \mathbb{K}$.
If an even linear map $R: \mathfrak{g}\rightarrow \mathfrak{g}$ satisfies
\begin{eqnarray}
&&\ R\circ\alpha=\alpha\circ R\ \textrm{and}\ R\circ\beta=\beta\circ R,\\
&&\ [R(x),R(y)]=R([R(x),y]+[x,R(y)]+\lambda[x,y]),
\end{eqnarray}
 for all $ x, y \in \mathfrak{g}$, then $R$ is called a  Rota-Baxter operator of weight $\lambda$ on $(\mathfrak{g}, [.,.],\alpha,\beta)$ .
\end{defn}
\begin{defn}
Let $(\mathfrak{g}, [\cdot,\cdot,\cdot],\alpha,\beta)$ be a $3$-BiHom-Lie superalgebra  and $\lambda\in \mathbb{K}$.
If an even linear map $R: \mathfrak{g}\rightarrow \mathfrak{g}$ satisfies
\begin{align}&\ R\circ\alpha=\alpha\circ R\ \textrm{and}\ R\circ\beta=\beta\circ R,\\
\ [R(x),R(y),R(z)]&=R([R(x),R(y),z]+[R(x),y,R(z)]+[x,R(y),R(z)]\nonumber\\&+\lambda [R(x),y,z]+\lambda[x,R(y),z]+\lambda[x,y,R(z)]+\lambda^2[x,y,z]),
\end{align}
 for all $ x, y,z \in \mathfrak{g}$, then $R$ is called a  Rota-Baxter operator of weight $\lambda$ on $(\mathfrak{g}, [\cdot,\cdot,\cdot],\alpha,\beta)$ .
\end{defn}
\begin{prop}
  Let $(\mathfrak{g}, [\cdot ,\cdot ,\cdot ],\alpha,\beta)$  be a $3$-BiHom-Lie superalgebra over $\mathbb{K}$. An invertible linear mapping $R : \mathfrak{g} \rightarrow \mathfrak{g}$
is a Rota-Baxter operator of weight $0$ on $\mathfrak{g}$ if and only if $R^{-1}$ is an even derivation
on $\mathfrak{g}$.
\end{prop}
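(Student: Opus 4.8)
The plan is to prove both implications by the same substitution trick, exploiting that $R$ is invertible and that both $R$ and $R^{-1}$ are even, so that every Koszul sign attached to $|R|$ or $|R^{-1}|$ is trivial and the two bracket identities become term-by-term images of one another under $R^{\pm 1}$.

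First I would dispose of the structural compatibilities. Since $R$ is even and bijective, $R^{-1}$ is automatically even, hence degree-preserving, so $R^{-1}$ maps $\mathcal{H}(\mathfrak g)$ to $\mathcal{H}(\mathfrak g)$; this legitimizes all the substitutions below. From $R\circ\alpha=\alpha\circ R$ and $R\circ\beta=\beta\circ R$, composing on the left and right with $R^{-1}$ gives $R^{-1}\circ\alpha=\alpha\circ R^{-1}$ and $R^{-1}\circ\beta=\beta\circ R^{-1}$, and the reverse implication is identical; so the commutation with $\alpha$ and $\beta$ holds on one side if and only if it holds on the other. It then remains only to match the bracket identity with the derivation identity.

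For the ``only if'' direction, assume $R$ is a Rota--Baxter operator of weight $0$. Given homogeneous $a,b,c\in\mathcal{H}(\mathfrak g)$, I would substitute $x=R^{-1}(a)$, $y=R^{-1}(b)$, $z=R^{-1}(c)$ into $[R(x),R(y),R(z)]=R([R(x),R(y),z]+[R(x),y,R(z)]+[x,R(y),R(z)])$, obtaining $[a,b,c]=R\big([a,b,R^{-1}(c)]+[a,R^{-1}(b),c]+[R^{-1}(a),b,c]\big)$, and then apply $R^{-1}$ to both sides to get $R^{-1}([a,b,c])=[R^{-1}(a),b,c]+[a,R^{-1}(b),c]+[a,b,R^{-1}(c)]$, which is exactly the derivation identity since $|R^{-1}|=\overline 0$ kills the sign factors. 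For the ``if'' direction, assume $R^{-1}$ is an even derivation; given homogeneous $x,y,z$, set $a=R(x)$, $b=R(y)$, $c=R(z)$, apply the derivation identity to $a,b,c$, namely $R^{-1}([a,b,c])=[R^{-1}(a),b,c]+[a,R^{-1}(b),c]+[a,b,R^{-1}(c)]=[x,R(y),R(z)]+[R(x),y,R(z)]+[R(x),R(y),z]$, and then apply $R$ to both sides, which reproduces the weight-$0$ Rota--Baxter identity.

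I do not expect a genuine obstacle: the argument is purely formal. The only points requiring care are confirming that evenness of $R$ (hence of $R^{-1}$) makes the Koszul signs in the derivation identity trivial, so the identities truly are $R^{\pm1}$-images of each other, and checking that the substitutions are valid on all homogeneous elements, which follows from bijectivity and degree-preservation of $R$. The $\alpha,\beta$-compatibility is handled once and for all as described above.
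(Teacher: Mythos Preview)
Your proof is correct and uses essentially the same substitution trick as the paper: replace $x,y,z$ by $R^{-1}$ of generic elements and apply $R^{-1}$ to both sides. The paper's version is terser---it writes only the forward direction explicitly and does not spell out the $\alpha,\beta$-commutation or the evenness/sign considerations---so your treatment is in fact more complete, but the approach is the same.
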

\begin{proof}

$R$ is an even invertible Rota-Baxter operator of weight $0$ on $\mathfrak{g}$, if and only if:\\
$[R(x),R(y),R(z)]=R([R(x),R(y),z]+[R(x),R(y),z]+[x,R(y),R(z)]),\;\forall x,y,z\in \mathfrak{g}$.\\

Suppose that $X=R(x),\;Y=R(y)$ and $Z=R(z)$, we have:\\

$[X,Y,Z]=R([X,Y,R^{-1}(Z)]+[X,R^{-1}(Y),Z]+[R^{-1}(X),Y,Z])$, which gives\\
$R^{-1}([X,Y,Z])=[X,Y,R^{-1}(Z)]+[X,R^{-1}(Y),Z]+[R^{-1}(X),Y,Z]$. Thus $R^{-1}$ is an even derivation on $\mathfrak{g}$.
\end{proof}
\begin{prop}
   Let $R$ be a Rota-Baxter of weight $\lambda$ on a nonmultiplicative BiHom-Lie superalgebra $(\mathfrak{g},[\cdot,\cdot],\alpha,\beta)$ and $\tau\in\mathfrak g^*$ satisfying to the two conditions \eqref{ConditionTau1} and \eqref{ConditionTau2}. Then $R$ is a Rota-Baxter operator of weight $\lambda$ on the induced nonmultiplicative $3$-BiHom-Lie superalgebra  $\mathfrak{g}_\tau$
 if and only if $R$ satisfies
 \begin{equation}\label{RotaBaxterCondition}
  \displaystyle\circlearrowleft_{x,y,z}(-1)^{|x||z|}\tau(x)[R(y),R(z)]\in \ker(R-\lambda Id_{\mathfrak{g}}),\; \forall \ x,y,z\in\mathcal{H}(\mathfrak{g}).
 \end{equation}
\end{prop}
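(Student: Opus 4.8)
The plan is to verify directly the two conditions defining a weight-$\lambda$ Rota-Baxter operator on $(\mathfrak g,[\cdot,\cdot,\cdot]_\tau,\alpha,\beta)$. The relations $R\circ\alpha=\alpha\circ R$ and $R\circ\beta=\beta\circ R$ are part of the hypothesis and do not involve the bracket, so only the ternary Rota-Baxter identity of weight $\lambda$ for $[\cdot,\cdot,\cdot]_\tau$ must be analysed. Write $L:=[R(x),R(y),R(z)]_\tau$ for its left-hand side and $R(\Sigma)$ for its right-hand side, where $\Sigma$ is the seven-term combination $[R(x),R(y),z]_\tau+[R(x),y,R(z)]_\tau+[x,R(y),R(z)]_\tau+\lambda\big([R(x),y,z]_\tau+[x,R(y),z]_\tau+[x,y,R(z)]_\tau\big)+\lambda^2[x,y,z]_\tau$. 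The aim is to prove that $L=R(\Sigma)$ holds for all homogeneous $x,y,z$ exactly when \eqref{RotaBaxterCondition} does.

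First I would expand $L$ by \eqref{crochet_n}; since $R$ is even this equals $\tau(R(x))[R(y),R(z)]-(-1)^{|x||y|}\tau(R(y))[R(x),R(z)]+(-1)^{|z|(|x|+|y|)}\tau(R(z))[R(x),R(y)]$. Then I would expand each of the seven ternary brackets in $\Sigma$ by \eqref{crochet_n} and regroup the resulting summands according to their scalar $\tau$-coefficient. Three of the groups carry the coefficients $\tau(R(x))$, $\tau(R(y))$, $\tau(R(z))$, and each one equals $\tau(R(\bullet))$ times a combination of the shape $[R(\bullet),\bullet]+[\bullet,R(\bullet)]+\lambda[\bullet,\bullet]$; applying $R$ and invoking the binary Rota-Baxter identity of weight $\lambda$ collapses every such combination to a single bracket $[R(\bullet),R(\bullet)]$, and the three terms thus produced cancel against $L$. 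The three remaining groups carry $\tau(x)$, $\tau(y)$, $\tau(z)$, and by bilinearity of $[\cdot,\cdot]$ each reassembles into $\tau(\bullet)$ times a single bracket of the form $[(R+\lambda\,\Id)(\bullet),(R+\lambda\,\Id)(\bullet)]$; applying $R$ and using the binary identity once more turns this residual piece into $(R-\lambda\,\Id)$ applied to the cyclic expression $\circlearrowleft_{x,y,z}(-1)^{|x||z|}\tau(x)[R(y),R(z)]$. The last step uses the standing assumption $\tau|_{\mathfrak g_{\overline 1}}\equiv 0$, so that the Koszul signs attached to the three cyclic positions trivialise and the three residual summands really are the terms of that cyclic sum. (The conditions \eqref{ConditionTau1} and \eqref{ConditionTau2} enter only through the theorem of Section~2, which makes $\mathfrak g_\tau$ a genuine nonmultiplicative $3$-BiHom-Lie superalgebra so that the statement is meaningful; the present computation uses only the evenness of $\tau$ and the binary Rota-Baxter identity.)

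Comparing the two sides then gives $L-R(\Sigma)=\pm\,(R-\lambda\,\Id)\big(\circlearrowleft_{x,y,z}(-1)^{|x||z|}\tau(x)[R(y),R(z)]\big)$, so the ternary Rota-Baxter identity holds for $\mathfrak g_\tau$ if and only if $\circlearrowleft_{x,y,z}(-1)^{|x||z|}\tau(x)[R(y),R(z)]$ lies in $\ker(R-\lambda\,\Id_{\mathfrak g})$ for all homogeneous $x,y,z$; both implications of the proposition are contained in this single identity, so the converse needs no separate argument. The main difficulty I foresee is organisational rather than conceptual: keeping the Koszul signs consistent across the three cyclic positions while matching the many summands of $R(\Sigma)$ against the three summands of $L$, and making sure each collapse via the binary Rota-Baxter identity is applied to exactly the right three-term block. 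Once the terms are sorted by their $\tau$-coefficient, the remainder is mechanical.
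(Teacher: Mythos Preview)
Your approach is correct and essentially the same as the paper's. The paper starts from $[R(x),R(y),R(z)]_\tau$, expands it via~\eqref{crochet_n}, applies the binary Rota--Baxter identity to each $[R(\cdot),R(\cdot)]$, and then regroups to exhibit the result as $R(\Sigma)+A$ with $A=(\lambda\,\Id_{\mathfrak g}-R)\big(\circlearrowleft_{x,y,z}(-1)^{|x||z|}\tau(x)[R(y),R(z)]\big)$; you perform the same computation by expanding $L$ and $R(\Sigma)$ separately, grouping by the six $\tau$-coefficients $\tau(R(x)),\tau(R(y)),\tau(R(z)),\tau(x),\tau(y),\tau(z)$, and collapsing each block with the binary identity---the bookkeeping differs but the algebra and the conclusion are identical.
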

\begin{proof}

Let $x,y,z\in\mathcal{H}(\mathfrak{g})$, we have
{\small\begin{align*}
[R(x),R(y),R(z)]_\tau =&\tau(R(x))[R(y),R(z)]-(-1)^{|x||y|}\tau(R(y))[R(x),R(z)]\\&+(-1)^{|z|(|x|+|y|)}\tau(R(z))[R(x),R(y)]\\
=&\tau(R(x))R\Big([R(y),z]+[y,R(z)]-\lambda[y,z]\Big)\\&-(-1)^{|x||y|}\tau(R(y))R\Big([R(x),z]+[x,R(z)]-\lambda[x,z]\Big)\\
&+(-1)^{|z|(|x|+|y|)}\tau(R(z))R\Big([R(x),y]+[x,R(y)]-\lambda[x,y]\Big)\\
=&R\Big(\tau(R(x))[R(y),z]-(-1)^{|x||y|}\tau(R(y))[R(x),z]\Big)\\&+R\Big(\tau(R(x))[y,R(z)]+(-1)^{|z|(|x|+|y|)}\tau(R(z))[R(x),y]\Big)\\
&+R\Big(-(-1)^{|x||y|}\tau(R(y))[x,R(z)]+(-1)^{|z|(|x|+|y|)}\tau(R(z))[x,R(y)]\Big)\\
&-\lambda R\Big(\tau(R(x))[y,z]\Big)+(-1)^{|x||y|}\lambda R\Big(\tau(R(y))[x,z]\Big)\\&-(-1)^{|z|(|x|+|y|)}\lambda R\Big(\tau(R(z))[x,y]\Big)\\
=&R\Big([R(x),R(y),z]_\tau+[R(x),y,R(z)]_\tau+[x,R(y),R(z)]_\tau\Big)\\&-\lambda R\Big([R(x),y,z]_\tau+[x,R(y),z]_\tau+[x,y,R(z)]_\tau\Big)\\
&+\lambda^2R([x,y,z]_\tau)+A
\end{align*}}
where
{\small\begin{align*}
A=&\lambda\tau(x)\Big(R\big([R(y),z]+[y,R(z)]\big)\Big)-(-1)^{|x||y|}\lambda\tau(y)\Big(R\big([R(x),z]+[x,R(z)]\big)\Big)\\
&+(-1)^{|z|(|x|+|y|)}\lambda\tau(z)\Big(R\big([R(x),y]+[x,R(y)]\big)\Big)\\&-\lambda^2\Big(\tau(x)[y,z]+(-1)^{|x||y|}\tau(y)[x,z]
-(-1)^{|z|(|x|+|y|)}\tau(z)[x,y]\Big)\\
&-R\big(\tau(x)[R(y),R(z)]-(-1)^{|x||y|}\tau(y)[R(x),R(z)]+(-1)^{|z|(|x|+|y|)}\tau(z)[R(x),R(y)]\big)\\
=&\lambda\tau(x)\Big([R(y),R(z)]+\lambda R([y,z])\Big)-(-1)^{|x||y|}\lambda\tau(y)\Big([R(x),R(z)]+\lambda R([x,z])\Big)\\
&+(-1)^{|z|(|x|+|y|)}\lambda\tau(z)\Big([R(x),R(y)]+\lambda R([x,R(y)])\Big)\\&-\lambda^2\Big(\tau(x)[y,z]+(-1)^{|x||y|}\tau(y)[x,z]
-(-1)^{|z|(|x|+|y|)}\tau(z)[x,y]\Big)\\
&-R\big(\tau(x)[R(y),R(z)]-(-1)^{|x||y|}\tau(y)[R(x),R(z)]+(-1)^{|z|(|x|+|y|)}\tau(z)[R(x),R(y)]\big)\\
=&\lambda\tau(x)[R(y),R(z)]-(-1)^{|x||y|}\lambda\tau(y)[R(x),R(z)]+(-1)^{|z|(|x|+|y|)}\lambda\tau(z)[R(x),R(y)]\\
&-R\big(\tau(x)[R(y),R(z)]-(-1)^{|x||y|}\tau(y)[R(x),R(z)]+(-1)^{|z|(|x|+|y|)}\tau(z)[R(x),R(y)]\big)\\
=&(\lambda Id_{\mathfrak{g}}-R)\Big(\tau(x)[R(y),R(z)]-(-1)^{|x||y|}\tau(y)[R(x),R(z)]+(-1)^{|z|(|x|+|y|)}\tau(z)[R(x),R(y)]\Big)\\
=&(\lambda Id_{\mathfrak{g}}-R) \displaystyle\circlearrowleft_{x,y,z}(-1)^{|x||z|}\tau(x)[R(y),R(z)],
\end{align*}}
thus, $R$ is a Rota-Baxter operator on $(\mathfrak{g},[.,.,.],\alpha,\beta)$ if and only if $A=0$.
\end{proof}

Let $R$ be a Rota-Baxter operator of weight $\lambda$ on a $3$-BiHom-Lie superalgebra $(\mathfrak{g},[\cdot,\cdot,\cdot],\alpha,\beta)$, we define a ternary
operation on $\mathfrak{g}$ by
\begin{eqnarray*}
[x_1,x_2,x_3]_R&=&[R(x_1),R(x_2),x_3]+[R(x_1),x_2,R(x_3)]+[x_1,R(x_2),R(x_3)]+\lambda [R(x_1),x_2,x_3]\\&+&\lambda[x_1,R(x_2),x_3]+
\lambda[x_1,x_2,R(x_3)]+\lambda^2[x_1,x_2,x_3]\\
&=&\displaystyle\sum_{\emptyset\neq I\subseteq[3]}\lambda^{|I|-1}[\widehat{R}_I(x_1), \widehat{R}_I(x_2), \widehat{R}_I(x_3)],
\end{eqnarray*}for all $x_1,x_2,x_3\in\mathcal{H}(\mathfrak{g})$,
where $\widehat{R}_I(x_i)=\left\{
                       \begin{array}{ll}
                         x_i &\; i\;\in I \hbox{ ;} \\
                         R(x_i) & \; i\;\not\in I\hbox{.}
                       \end{array}
                     \right.$\\
Then we have the following result.

\begin{thm}
Let $R$ be a Rota-Baxter operator of weight $\lambda$ on a $3$-BiHom-Lie superalgebra $(\mathfrak{g},[\cdot,\cdot,\cdot],\alpha,\beta)$. Then $(\mathfrak{g},[\cdot,\cdot,\cdot]_R,\alpha,\beta)$ is a $3$-BiHom-Lie superalgebra and $R$ be a Rota-Baxter operator of weight $\lambda$ on  $(\mathfrak{g},[\cdot,\cdot,\cdot]_R,\alpha,\beta)$.
\end{thm}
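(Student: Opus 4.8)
The plan is to verify for the deformed bracket $[\cdot,\cdot,\cdot]_R$ all the axioms of a $3$-BiHom-Lie superalgebra, namely the two BiHom skew-symmetry identities, the $3$-BiHom-super-Jacobi identity \eqref{BiHom-Nambu}, and multiplicativity of $\alpha,\beta$, and then to check that $R$ remains a weight-$\lambda$ Rota--Baxter operator for it. Everything hinges on one elementary observation: collecting terms in the defining relation of a weight-$\lambda$ Rota--Baxter operator and comparing with the expansion of $[\cdot,\cdot,\cdot]_R$ shows that $R\big([x,y,z]_R\big)=[R(x),R(y),R(z)]$ for all $x,y,z\in\mathcal{H}(\mathfrak g)$, i.e. $R$ intertwines $[\cdot,\cdot,\cdot]_R$ with $[\cdot,\cdot,\cdot]$. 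The routine axioms I would dispatch first: since $[\cdot,\cdot,\cdot]_R=\sum_{\emptyset\neq I\subseteq[3]}\lambda^{|I|-1}[\widehat{R}_I(\cdot),\widehat{R}_I(\cdot),\widehat{R}_I(\cdot)]$, and $R$ is even (so preserves the $\mathbb{Z}_2$-grading) and commutes with $\alpha$ and $\beta$ (so $\widehat{R}_I$ commutes with them as well), every summand is a scalar multiple of an original bracket evaluated at $R$-decorated arguments of unchanged degree; applying the skew-symmetry of $[\cdot,\cdot,\cdot]$ termwise and summing gives the skew-symmetry of $[\cdot,\cdot,\cdot]_R$ with the same Koszul signs, while $\alpha([x,y,z]_R)=[\alpha x,\alpha y,\alpha z]_R$ and $\beta([x,y,z]_R)=[\beta x,\beta y,\beta z]_R$ follow from $\alpha,\beta$ being morphisms of $[\cdot,\cdot,\cdot]$ commuting with $R$, and $\alpha\beta=\beta\alpha$ is inherited verbatim.

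The core is \eqref{BiHom-Nambu} for $[\cdot,\cdot,\cdot]_R$. Fix homogeneous $x,y,z,u,v$. On the left-hand side I would expand the outer $[\cdot,\cdot,\cdot]_R$ over nonempty $I\subseteq[3]$ and split according to whether the slot holding the inner bracket carries $R$: if it does, the intertwining identity above turns $R\big([\beta z,\beta u,\alpha v]_R\big)$ into $[\beta R z,\beta R u,\alpha R v]$, an honest iterated original bracket; if it does not, expand the inner $[\cdot,\cdot,\cdot]_R$ as well. Indexing by the subset $S\subseteq\{x,y,z,u,v\}$ of arguments that end up carrying $R$, and writing $\bar p=R(p)$ for $p\in S$ and $\bar p=p$ otherwise, a finite bookkeeping shows that the left-hand side equals $\sum_{S\subsetneq\{x,y,z,u,v\}}\lambda^{4-|S|}\,[\beta^2\bar x,\beta^2\bar y,[\beta\bar z,\beta\bar u,\alpha\bar v]]$, each proper $S$ occurring exactly once and the fully decorated $S$ not occurring. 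The same expansion applied to the three terms on the right-hand side of \eqref{BiHom-Nambu} (now with $[\cdot,\cdot,\cdot]_R$) produces, over the very same proper sets $S$ and the same powers $\lambda^{4-|S|}$, exactly the right-hand side of \eqref{BiHom-Nambu} evaluated at $\bar x,\bar y,\bar z,\bar u,\bar v$, the sign prefactors being unaffected because $R$ preserves degrees. For each $S$ the two contributions then coincide by the $3$-BiHom-super-Jacobi identity of $(\mathfrak g,[\cdot,\cdot,\cdot],\alpha,\beta)$, and summing over $S$ gives \eqref{BiHom-Nambu} for $[\cdot,\cdot,\cdot]_R$. I expect this combinatorial reorganization to be the main obstacle: one must check that, after the double expansion, each side really does collapse to the single-term-per-proper-$S$ form with matching scalar $\lambda^{4-|S|}$ and matching $R$-pattern, with no leftover terms and no spurious multiplicities.

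Finally, to see that $R$ is a Rota--Baxter operator of weight $\lambda$ on $(\mathfrak g,[\cdot,\cdot,\cdot]_R,\alpha,\beta)$, apply the intertwining identity to the triples $\big(\widehat{R}_I(x),\widehat{R}_I(y),\widehat{R}_I(z)\big)$ and sum against $\lambda^{|I|-1}$ over nonempty $I\subseteq[3]$: since $R\widehat{R}_I(p)$ equals $R(p)$ on the undecorated slots and $R^2(p)$ otherwise, which is precisely $\widehat{R}_I\big(R(p)\big)$, one obtains $R\big(\sum_{\emptyset\neq I\subseteq[3]}\lambda^{|I|-1}[\widehat{R}_I(x),\widehat{R}_I(y),\widehat{R}_I(z)]_R\big)=[R(x),R(y),R(z)]_R$, and together with $R\alpha=\alpha R$ and $R\beta=\beta R$ this is exactly the defining relation of a weight-$\lambda$ Rota--Baxter operator for the bracket $[\cdot,\cdot,\cdot]_R$. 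This completes the plan.
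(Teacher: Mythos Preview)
Your proposal is correct and follows essentially the same route as the paper. The paper also establishes the key intertwining identity $R([x,y,z]_R)=[R(x),R(y),R(z)]$, splits the outer $[\cdot,\cdot,\cdot]_R$ according to whether the slot containing the inner bracket receives an $R$, and reorganizes the double expansion into a single sum over nonempty subsets $I\subseteq[5]$ of \emph{undecorated} slots with weight $\lambda^{|I|-1}$; your indexing by the complementary set $S$ of decorated slots with weight $\lambda^{4-|S|}$ is the same bookkeeping, and your verification that $R$ is again Rota--Baxter via $R\widehat{R}_I=\widehat{R}_I R$ is exactly the paper's argument as well.
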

\begin{proof}
It is clear that $[\cdot,\cdot ,\cdot ]_R$ is  super-skewsymetric.

Let $x_1, x_2, x_3, x_4, x_5\in\mathcal{H}(\mathfrak{g})$. Denote $y_1=\beta^2(x_1),\;y_2=\beta^2(x_2)$ and $y_3 = [\beta(x_3), \beta(x_4),\alpha( x_5)]_R$,
we have

$[\beta^2(x_1),\beta^2(x_2),[\beta(x_3),\beta(x_4),\alpha(x_5)]_R]_R$
\begin{eqnarray*}
&=&[y_1,y_2,y_3]_R
\\&=&\displaystyle\sum_{\emptyset\neq I\subseteq[3]}\lambda^{|I|-1}[\widehat{R}_I(y_1),\widehat{R}_I(y_2),\widehat{R}_I(y_3)]\\
&=&\displaystyle\sum_{\emptyset\neq I\subseteq[3],\;3\not\in I}\lambda^{|I|-1}[\widehat{R}_I(y_1),\widehat{R}_I(y_2),R_I(y_3)]
+\displaystyle\sum_{\emptyset\neq I\subseteq[3],\;3\in I}\lambda^{|I|-1}[\widehat{R}_I(y_1),\widehat{R}_I(y_2),y_3]\\
&=&\displaystyle\sum_{\emptyset\neq I\subseteq[3],\;3\not\in I}\lambda^{|I|-1}[\widehat{R}_I(y_1),\widehat{R}_I(y_2),[R(\beta(x_3),R(\beta(x_4)),R(\alpha(x_5))]]\\
&+&\displaystyle\sum_{\emptyset\neq I\subseteq[3],\;3\in I}\lambda^{|I|-1}
\Big[\widehat{R}_I(y_1),\widehat{R}_I(y_2),\displaystyle\sum_{\emptyset\neq J\subseteq \mathcal{P}(\{3,4,5\})}
\lambda^{|J|-1}[\widehat{R}_J(\beta(x_3)),\widehat{R}_J(\beta(x_4)),\widehat{R}_J(\beta(x_5))]\Big]\\
&=&\displaystyle\sum_{\emptyset\neq K\subseteq[5],\;K\cap\mathcal{P}(\{3,4,5\})=\emptyset}\lambda^{|K|-1}
[\widehat{R}_K(\beta^2(x_1)),\widehat{R}_K(\beta^2(x_2)),[\widehat{R}_K(\beta(x_3)),\widehat{R}_K(\beta(x_4)),\widehat{R}_K(\alpha(x_5))]]\\
&+&\displaystyle\sum_{\emptyset\neq I\subseteq[5],\;I\cap\mathcal{P}(\{3,4,5\})\neq\emptyset}\lambda^{|I|-1}
[\widehat{R}_I(\beta^2(x_1)),\widehat{R}_I(\beta^2(x_2)),[\widehat{R}_I(\beta(x_3)),\widehat{R}_I(\beta(x_4)),\widehat{R}_I(\alpha(x_5))]]\\
&=&\displaystyle\sum_{\emptyset\neq I\subseteq[5]}\lambda^{|I|-1}
[\widehat{R}_I(\beta^2(x_1)),\widehat{R}_I(\beta^2(x_2)),[\widehat{R}_I(\beta(x_3)),\widehat{R}_I(\beta(x_4)),\widehat{R}_I(\alpha(x_5))]].
\end{eqnarray*}
Since $(\mathfrak{g},[\cdot,\cdot,\cdot],\alpha,\beta)$ is a $3$-BiHom-Lie superalgebra, $R\circ\alpha=\alpha\circ R$ and $R\circ\beta=\beta\circ R$, for any given
$\emptyset\neq I\subseteq[5]$, we have\\
$[\widehat{R}_I(\beta^2(x_1)),\widehat{R}_I(\beta^2(x_2)),[\widehat{R}_I(\beta(x_3),\widehat{R}_I(\beta(x_4)),\widehat{R}_I(\alpha(x_5))]]$
\begin{eqnarray*}
&=&(-1)^{(|x_4|+|x_5|)(|x_1|+|x_2|+|x_3|)}
[\widehat{R}_I(\beta^2(x_4)),\widehat{R}_I(\beta^2(x_5)),[\widehat{R}_I(\beta(x_1)),\widehat{R}_I(\beta(x_2)),\widehat{R}_I(\alpha(x_3))]]\\
&-&(-1)^{(|x_3|+|x_5|)(|x_1|+|x_2|)+|x_4||x_5|}
[\widehat{R}_I(\beta^2(x_3)),\widehat{R}_I(\beta^2(x_5)),[\widehat{R}_I(\beta(x_1)),\widehat{R}_I(\beta(x_2)),\widehat{R}_I(\alpha(x_4))]]\\
&+&(-1)^{(|x_1|+|x_2|)(|x_3|+|x_4|)}
[\widehat{R}_I(\beta^2(x_3)),\widehat{R}_I(\beta^2(x_4)),[\widehat{R}_I(\beta(x_1)),\widehat{R}_I(\beta(x_2)),\widehat{R}_I(\alpha(x_5))]].
\end{eqnarray*}
Thus from the above sum, we conclude that $(\mathfrak{g},[~,~,~]_R,\alpha,\beta)$ is a BiHom 3-Lie superalgebra.\\

It remains to show that $R$ is a Rota-Baxter operator of weight $\lambda$ on  $(\mathfrak{g},[~,~,~]_R,\alpha,\beta)$.\\
\begin{eqnarray*}
[R(x_1),R(x_2),R(x_3)]_R\Big)&=&\displaystyle\sum_{\emptyset\neq I\subseteq[3]}\lambda^{|I|-1}[\widehat{R}(R(x_1)),\widehat{R}(R(x_2)),\widehat{R}(R(x_3))]\\
&=&\displaystyle\sum_{\emptyset\neq I\subseteq[3]}\lambda^{|I|-1}[R(\widehat{R}(x_1)),R(\widehat{R}(x_2)),R(\widehat{R}(x_3))]\\
&=&\displaystyle\sum_{\emptyset\neq I\subseteq[3]}\lambda^{|I|-1}R([\widehat{R}(x_1),\widehat{R}(x_2),\widehat{R}(x_3)])\\
&=&R\Big(\displaystyle\sum_{\emptyset\neq I\subseteq[3]}\lambda^{|I|-1}[\widehat{R}(x_1),\widehat{R}(x_2),\widehat{R}(x_3)]\Big),
\end{eqnarray*}
which gives the requested result. The theorem is proved.
\end{proof}
\begin{prop}
\label{BiHomrotabaxter}
Let $(\mathfrak{g},[\cdot,\cdot,\cdot],\alpha,\beta)$ be a $3$-BiHom-Lie superalgebra and $R$ be a Rota-Baxter operator of weight $\lambda$ on $\mathfrak{g}$ such that
$R^2=R$, then $(\mathfrak{g},[\cdot,\cdot,\cdot]_R,\alpha\circ R,\beta\circ  R)$ is a noncommutative $3$-BiHom-Lie superalgebra.

\end{prop}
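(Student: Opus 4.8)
The plan is to bootstrap from the previous theorem rather than recompute anything. Since $R$ is a Rota-Baxter operator of weight $\lambda$ on $(\mathfrak{g},[\cdot,\cdot,\cdot],\alpha,\beta)$, that theorem already tells us that $(\mathfrak{g},[\cdot,\cdot,\cdot]_R,\alpha,\beta)$ is a $3$-BiHom-Lie superalgebra; in particular $[\cdot,\cdot,\cdot]_R$ is BiHom-super-skewsymmetric with respect to $\alpha,\beta$ and satisfies the $3$-BiHom-super-Jacobi identity \eqref{BiHom-Nambu} with twisting maps $\alpha,\beta$. It therefore suffices to verify the two skew-symmetry conditions and \eqref{BiHom-Nambu} for the bracket $[\cdot,\cdot,\cdot]_R$ with the new twisting maps $\tilde\alpha:=\alpha\circ R$ and $\tilde\beta:=\beta\circ R$. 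The standing facts I will use are that $R$ is even (so $|R(x)|=|x|$ for homogeneous $x$), that $R\circ\alpha=\alpha\circ R$ and $R\circ\beta=\beta\circ R$, and that $R^2=R$. From these one gets $\tilde\beta^{\,2}=\beta R\beta R=\beta^2R^2=\beta^2R$, $\tilde\alpha^{\,2}=\alpha^2R$, and $\tilde\alpha\tilde\beta=\alpha\beta R^2=\beta\alpha R^2=\tilde\beta\tilde\alpha$ (this last one is not needed for the nonmultiplicative statement but is harmless).

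First I would treat the skew-symmetry. Because $(\mathfrak{g},[\cdot,\cdot,\cdot]_R,\alpha,\beta)$ is a $3$-BiHom-Lie superalgebra, the identities $[\beta(a),\beta(b),\alpha(c)]_R=-(-1)^{|a||b|}[\beta(b),\beta(a),\alpha(c)]_R$ and $[\beta(a),\beta(b),\alpha(c)]_R=-(-1)^{|b||c|}[\beta(a),\beta(c),\alpha(b)]_R$ hold for all homogeneous $a,b,c$. Substituting $a=R(x)$, $b=R(y)$, $c=R(z)$ and using $\beta(R(x))=\tilde\beta(x)$, $\alpha(R(z))=\tilde\alpha(z)$ together with the fact that the sign exponents depend only on the (unchanged) $\mathbb{Z}_2$-degrees, I obtain at once the two skew-symmetry conditions for $[\cdot,\cdot,\cdot]_R$ relative to $\tilde\alpha,\tilde\beta$. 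Note that $R^2=R$ is not even used at this stage.

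The second step is the $3$-BiHom-super-Jacobi identity, which is the only mildly delicate point. I apply \eqref{BiHom-Nambu} for $(\mathfrak{g},[\cdot,\cdot,\cdot]_R,\alpha,\beta)$ to the arguments $R(x),R(y),R(z),R(u),R(v)$ in place of $x,y,z,u,v$. Its left-hand side becomes $[\beta^2R(x),\beta^2R(y),[\beta R(z),\beta R(u),\alpha R(v)]_R]_R$; using $\tilde\beta^{\,2}=\beta^2R$ together with $\tilde\beta=\beta R$ and $\tilde\alpha=\alpha R$ this is exactly $[\tilde\beta^{\,2}(x),\tilde\beta^{\,2}(y),[\tilde\beta(z),\tilde\beta(u),\tilde\alpha(v)]_R]_R$, and the three terms on the right-hand side are rewritten in the same way. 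Since every sign in \eqref{BiHom-Nambu} depends only on the $\mathbb{Z}_2$-degrees, which $R$ preserves, the resulting equation is precisely \eqref{BiHom-Nambu} for $(\mathfrak{g},[\cdot,\cdot,\cdot]_R,\tilde\alpha,\tilde\beta)$. This is the sole place where $R^2=R$ enters: it is what collapses $(\beta R)^2=\beta^2R^2$ to $\beta^2R$, matching the $\beta^2$ produced by composing the original $\beta^2$ with the substituted argument $R(x)$.

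Combining the two steps shows that $(\mathfrak{g},[\cdot,\cdot,\cdot]_R,\alpha\circ R,\beta\circ R)$ satisfies all the defining identities of a nonmultiplicative $3$-BiHom-Lie superalgebra, which is the assertion. I do not expect any genuine obstacle; essentially the whole argument is the observation that the axioms of $(\mathfrak{g},[\cdot,\cdot,\cdot]_R,\alpha,\beta)$ are stable under pre-composing all slots with the idempotent $R$. The only thing to be careful about is that multiplicativity need not survive the change of twisting maps — $R$ is in general not an algebra morphism for $[\cdot,\cdot,\cdot]_R$ with respect to $\alpha R,\beta R$ — which is exactly why the conclusion is stated in the nonmultiplicative form.
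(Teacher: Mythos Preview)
Your argument is correct. The paper states this proposition without proof, so there is nothing to compare against; your bootstrap from the preceding theorem (applying the axioms of $(\mathfrak g,[\cdot,\cdot,\cdot]_R,\alpha,\beta)$ to the arguments $R(x),R(y),\dots$ and using $R^2=R$ together with $R\alpha=\alpha R$, $R\beta=\beta R$ to rewrite $\beta^2R=\tilde\beta^{\,2}$, etc.) is the natural and efficient route, and your observation about why only the nonmultiplicative conclusion is claimed is on point.
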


\section{Nijenhuis Operators on BiHom $3$-Lie superalgebras}

In this section, we study the second order deformation of $3$-BiHom-Lie superalgebras, and introduce the notion of Nijenhuis operator on $3$-BiHom-Lie superalgebras, which could generate a trivial deformation. In the other part of this section we give some properties and results of Nijenhuis operators.

\subsection{Second-order deformation of $3$-BiHom-Lie superalgebras}
Let $(\mathfrak{g},[\cdot,\cdot,\cdot],\alpha,\beta)$ be a $3$-BiHom-Lie superalgebras and $\omega_i:\mathfrak{g}\times\mathfrak{g}\times\mathfrak{g}\longrightarrow\mathfrak{g},\;i=1,2$ be a super-skewsymetric multilinear maps. Consider a $\lambda$-parametrized
family of $3$-linear operations:
\begin{equation}\label{omegaparam}
[x_1,x_2,x_3]_\lambda=[x_1,x_2,x_3]+\lambda\omega_1(x_1,x_2,x_3)+\lambda^2\omega_2(x_1,x_2,x_3),
\end{equation}
where $\lambda\in \mathbb{K}$. If all $[\cdot,\cdot,\cdot]_\lambda$ are $3$-BiHom-Lie superalgebra structures, we say
that $\omega_1,\;\omega_2$ generate an second-order $1$-parameter deformation of the $3$-BiHom-Lie superalgebra $(\mathfrak{g},[\cdot,\cdot,\cdot],\alpha,\beta)$.
\begin{prop}
With the above notations, $\omega_1,\;\omega_2$ generate a second-order $1$-parameter deformation of the $3$-BiHom-Lie superalgebra $(\mathfrak{g},\omega_0=[\cdot,\cdot,\cdot],\alpha,\beta)$ if and only if for all $i,j=1,2$ and $l=1,\cdots,4$ the following conditions are satisfied :
\begin{eqnarray}
&&\omega_i\circ\alpha^{\otimes3}=\alpha\circ\omega_i\;\; and\;\; \omega_i\circ\beta^{\otimes3}=\beta\circ\omega_i,\\
&&\displaystyle\sum_{i+j=l}\omega_i\circ_{\alpha,\beta}\omega_j=0,
\end{eqnarray}
where  $\omega_i\circ_{\alpha,\beta}\omega_j:\wedge^2\mathfrak{g}\otimes\wedge^2\mathfrak{g}\wedge\mathfrak{g}\longrightarrow\mathfrak{g}$ is defined by
\begin{eqnarray*}
\omega_i\circ_{\alpha,\beta}\omega_j(X,Y,z)&=&\omega_i(\omega_j(X,.)\ast_{\alpha,\beta} Y,\beta^2(z))-\omega_i(\widetilde{\beta}^2(X),\omega_j(\widetilde{\beta}(Y),\alpha(z)))\\
&+&(-1)^{|X||Y|}\omega_i(\widetilde{\beta}^2(Y),\omega_j(\widetilde{\beta}(X),\alpha(z))),
\end{eqnarray*}
where $\omega_j(X,.)\ast_{\alpha,\beta} Y\in\wedge^2\mathfrak{g}$ is given by
$$\omega_j(X,.)\ast_{\alpha,\beta} Y=\omega_j(\widetilde{\beta}(X),\alpha(y_1))\wedge\beta^2(y_2)+(-1)^{|y_1||X|}\beta^2(y_1)\wedge\omega_j(\widetilde{\beta}(X),\alpha(y_2))$$
\end{prop}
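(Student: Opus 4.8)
The plan is to follow the standard pattern for characterizing formal deformations: the $\lambda$-family $[\cdot,\cdot,\cdot]_\lambda$ is a $3$-BiHom-Lie superalgebra structure for every $\lambda$ if and only if each defining axiom, expanded as a polynomial in $\lambda$, has all of its coefficients vanishing. First I would dispose of the super-skewsymmetry conditions in \eqref{BiHom-Nambu}: since $\omega_0=[\cdot,\cdot,\cdot]$ is already BiHom-super-skewsymmetric and each $\omega_i$ ($i=1,2$) is assumed super-skewsymmetric, the bracket $[\cdot,\cdot,\cdot]_\lambda$ is super-skewsymmetric for all $\lambda$ automatically, so these impose no condition. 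Next, the compatibilities $\alpha([x_1,x_2,x_3]_\lambda)=[\alpha(x_1),\alpha(x_2),\alpha(x_3)]_\lambda$ and $\beta([x_1,x_2,x_3]_\lambda)=[\beta(x_1),\beta(x_2),\beta(x_3)]_\lambda$ are polynomial identities in $\lambda$; comparing the coefficients of $\lambda^0$ (which hold because $(\mathfrak g,[\cdot,\cdot,\cdot],\alpha,\beta)$ is a $3$-BiHom-Lie superalgebra), $\lambda^1$ and $\lambda^2$, they are equivalent to $\omega_i\circ\alpha^{\otimes3}=\alpha\circ\omega_i$ and $\omega_i\circ\beta^{\otimes3}=\beta\circ\omega_i$ for $i=1,2$. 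This is the first displayed family of conditions.

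The core step is to rewrite the $3$-BiHom-super-Jacobi identity \eqref{BiHom-Nambu} (equivalently its form in the Remark after it) as the single quadratic equation $\mu\circ_{\alpha,\beta}\mu=0$, where $\mu$ denotes an arbitrary even, super-skewsymmetric ternary bracket compatible with $\alpha,\beta$ and $\circ_{\alpha,\beta}$ is the operation in the statement. Here one must check that the three summands of $\mu\circ_{\alpha,\beta}\mu(X,Y,z)$, after unwinding the gadget $\mu(X,\cdot)\ast_{\alpha,\beta}Y$ into its two pieces, reproduce exactly — signs, $\widetilde\beta$ and $\widetilde\beta^2$ twists included — the left-hand side of \eqref{BiHom-Nambu} minus its right-hand side, with $X=x_1\wedge x_2$ the acting pair and $y_1\wedge y_2\wedge z$ the argument. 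Once this identification is in place, I would substitute $[\cdot,\cdot,\cdot]_\lambda=\omega_0+\lambda\omega_1+\lambda^2\omega_2$ and use that $\circ_{\alpha,\beta}$ is biadditive and $\mathbb{K}$-bilinear in its two slots (immediate: $\omega_i$ enters linearly on the outside, $\omega_j$ enters linearly through $\ast_{\alpha,\beta}$, and the scalar $\lambda$ carries no Koszul sign since all $\omega_k$ are even). Expanding,
\[
[\cdot,\cdot,\cdot]_\lambda\circ_{\alpha,\beta}[\cdot,\cdot,\cdot]_\lambda=\sum_{l=0}^{4}\lambda^l\!\!\sum_{i+j=l}\omega_i\circ_{\alpha,\beta}\omega_j .
\]
The $\lambda^0$ term $\omega_0\circ_{\alpha,\beta}\omega_0$ vanishes because it is precisely the $3$-BiHom-super-Jacobi identity of $(\mathfrak g,[\cdot,\cdot,\cdot],\alpha,\beta)$, true by hypothesis. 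Hence $[\cdot,\cdot,\cdot]_\lambda$ satisfies the $3$-BiHom-super-Jacobi identity for all $\lambda$ (equivalently, treating $\lambda$ as a formal parameter) if and only if $\sum_{i+j=l}\omega_i\circ_{\alpha,\beta}\omega_j=0$ for $l=1,2,3,4$, since a polynomial in $\lambda$ vanishes identically iff all its coefficients do. Combining this with the $\alpha,\beta$-compatibility conditions yields the claimed equivalence in both directions.

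The main obstacle is the bookkeeping in the core step: matching the graded signs of \eqref{BiHom-Nambu} against those produced by $\circ_{\alpha,\beta}$ and $\ast_{\alpha,\beta}$, in particular verifying that the factor $(-1)^{|X||Y|}$ together with the two interior signs $(-1)^{|y_1||X|}$ coming from $\ast_{\alpha,\beta}$ combine to give exactly the exponents $(|u|+|v|)(|x|+|y|+|z|)$, $(|z|+|v|)(|x|+|y|)+|u||v|$ and $(|z|+|u|)(|x|+|y|)$ appearing on the right-hand side of \eqref{BiHom-Nambu}, and that the twists $\widetilde\beta$, $\widetilde\beta^2$ correctly match the powers $\beta$, $\beta^2$ there. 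The remaining parts — skew-symmetry being automatic, the $\alpha,\beta$-compatibility reducing termwise, and the polynomial-coefficient argument — are routine.
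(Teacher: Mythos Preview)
Your proposal is correct and follows essentially the same approach as the paper: rewrite the $3$-BiHom-super-Jacobi identity for $\omega_\lambda$ as the quadratic equation $\omega_\lambda\circ_{\alpha,\beta}\omega_\lambda=0$, then expand in $\lambda$ and compare coefficients, handling the $\alpha,\beta$-compatibilities the same way. You are in fact more explicit than the paper about the points that need checking (automatic super-skewsymmetry, the sign and twist bookkeeping identifying \eqref{BiHom-Nambu} with $\mu\circ_{\alpha,\beta}\mu=0$, and the vanishing of the $\lambda^0$ term), all of which the paper simply asserts.
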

\begin{proof}
$(\mathfrak{g},\omega_{\lambda},\alpha,\beta)$ are $3$-BiHom-Lie superalgebra structures if and only if
\begin{equation}\label{alphbetaNijen}
\omega_\lambda\circ\alpha^{\otimes3}=\alpha\circ\omega_\lambda\;\; and\;\; \omega_\lambda\circ\beta^{\otimes3}=\beta\circ\omega_\lambda
\end{equation}
\begin{equation}\label{omegalambdaNijen}
\omega_\lambda(\widetilde{\beta}^2(X),\omega_\lambda(\widetilde{\beta}(Y),\alpha(z)))=\omega_\lambda(\omega_\lambda(X,.)\ast Y,\beta^2(z))
+(-1)^{|X||Y|}\omega_\lambda(\widetilde{\beta}^2(Y),\omega_\lambda(\widetilde{\beta}(X),\alpha(z)))
\end{equation}
By \eqref{alphbetaNijen}, we have
$$\omega_i\circ\alpha^{\otimes3}=\alpha\circ\omega_i\;\; and\;\; \omega_i\circ\beta^{\otimes3}=\beta\circ\omega_i.$$
Expending the equations in \eqref{omegalambdaNijen} and collecting coefficients of $\lambda^l$, we see that \eqref{omegalambdaNijen} is equivalent to the system of equation
{\small$$\displaystyle\sum_{i+j=l}\omega_i(\widetilde{\beta}^2(X),\omega_j(\widetilde{\beta}(Y),\alpha(z)))=
\displaystyle\sum_{i+j=l}\omega_i(\omega_j(X,.)\ast_{\alpha,\beta} Y,\beta^2(z))
+\displaystyle\sum_{i+j=l}(-1)^{|X||Y|}\omega_i(\widetilde{\beta}^2(Y),\omega_j(\widetilde{\beta}(X),\alpha(z))).$$
}Thus,we have
$$\displaystyle\sum_{i+j=l}\omega_i(\omega_j(X,.)\ast_{\alpha,\beta} Y,\beta^2(z))-\omega_i(\widetilde{\beta}^2(X),\omega_j(\widetilde{\beta}(Y),\alpha(z)))
+(-1)^{|X||Y|}\omega_i(\widetilde{\beta}^2(Y),\omega_j(\widetilde{\beta}(X),\alpha(z))).$$
\end{proof}
\begin{cor}
If $\omega_1,\omega_2$ generate a second-order $1$-parameter deformation of the $3$-BiHom-Lie superalgebra $(\mathfrak{g},\omega_0=[\cdot,\cdot,\cdot],\alpha,\beta)$, then $\omega_1$ is  $2$-cocycle of the $3$-BiHom-Lie superalgebra $(\mathfrak{g},\omega_0=[\cdot,\cdot,\cdot],\alpha,\beta)$ with the coefficients in the adjoint representation
\end{cor}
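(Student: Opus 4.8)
The plan is to extract the statement about $\omega_1$ from the $l=2$ case of the deformation equation established in the preceding Proposition. When $l=2$ the condition $\sum_{i+j=2}\omega_i\circ_{\alpha,\beta}\omega_j=0$ expands to $\omega_0\circ_{\alpha,\beta}\omega_2+\omega_1\circ_{\alpha,\beta}\omega_1+\omega_2\circ_{\alpha,\beta}\omega_0=0$, while the $l=1$ case gives $\omega_0\circ_{\alpha,\beta}\omega_1+\omega_1\circ_{\alpha,\beta}\omega_0=0$. First I would recall the (Chevalley--Eilenberg-type) coboundary operator $\delta$ for the cohomology of the $3$-BiHom-Lie superalgebra $(\mathfrak g,\omega_0,\alpha,\beta)$ with coefficients in the adjoint representation, and observe that the combination $\omega_0\circ_{\alpha,\beta}\omega_1+\omega_1\circ_{\alpha,\beta}\omega_0$ is, up to the conventional sign and twisting by $\alpha,\beta$, exactly $\delta\omega_1$ evaluated on the relevant arguments — this is the standard identification of the ``square-zero at first order'' condition with the cocycle condition.

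Second I would make this identification precise: using the explicit formula for $\omega_i\circ_{\alpha,\beta}\omega_j$ given in the Proposition, and the multiplicativity relations $\omega_1\circ\alpha^{\otimes 3}=\alpha\circ\omega_1$, $\omega_1\circ\beta^{\otimes 3}=\beta\circ\omega_1$ (which are part of the deformation conditions and are needed for $\omega_1$ to define a cochain in the BiHom complex), I would write out $\delta\omega_1(X,Y,z)$ and check termwise that it agrees with $-(\omega_0\circ_{\alpha,\beta}\omega_1+\omega_1\circ_{\alpha,\beta}\omega_0)(X,Y,z)$. The $\widetilde\beta$-twisted wedge expression $\omega_j(X,\cdot)\ast_{\alpha,\beta}Y$ unpacks into the two Leibniz-type terms displayed in the Proposition, which are precisely the terms that appear in $\delta$ acting through the bracket in the first two slots.

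From the $l=1$ equation we then get $\delta\omega_1=0$, i.e. $\omega_1$ is a $2$-cocycle with coefficients in the adjoint representation, which is the assertion of the Corollary. I would also note explicitly that the multiplicativity conditions on $\omega_1$ guarantee $\omega_1$ lies in the correct space of BiHom-cochains (equivariance under $\alpha$ and $\beta$), so that ``$2$-cocycle'' is meaningful here.

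The main obstacle I anticipate is purely bookkeeping rather than conceptual: matching the Koszul signs $(-1)^{|X||Y|}$ and the placements of $\alpha$, $\beta$, $\beta^2$, $\widetilde\beta$, $\widetilde\beta^2$ in the definition of $\circ_{\alpha,\beta}$ against whatever sign and twisting conventions are used in the definition of the adjoint-representation coboundary $\delta$ for $3$-BiHom-Lie superalgebras. Since the paper has not written $\delta$ out in this excerpt, I would either cite the relevant cohomology definition or state it and then verify the two formulas coincide on homogeneous arguments; once the conventions are fixed, the identity $\delta\omega_1 = \pm(\omega_0\circ_{\alpha,\beta}\omega_1+\omega_1\circ_{\alpha,\beta}\omega_0)$ is a direct term-by-term comparison, and the Corollary follows immediately from the $l=1$ deformation equation.
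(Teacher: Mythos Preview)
Your approach is essentially the same as the paper's: the paper's entire proof is to observe that the $l=1$ case of the deformation equation reads $\omega_0\circ_{\alpha,\beta}\omega_1+\omega_1\circ_{\alpha,\beta}\omega_0=0$ and to declare this equivalent to $\omega_1$ being a $2$-cocycle. Your opening line about the $l=2$ case is a slip (that expansion plays no role here), but you correctly isolate and use the $l=1$ equation; your further discussion of matching this to $\delta\omega_1$ and checking the $\alpha,\beta$-equivariance of $\omega_1$ simply spells out what the paper leaves implicit.
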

\begin{proof}
For $l=1$, the condition $(24)$ gives the following equality
$$\omega_0\circ_{\alpha,\beta}\omega_1+\omega_1\circ_{\alpha,\beta}\omega_0=0$$
which is equivalent to that $\omega_1$ is a $2$-cocycle.
\end{proof}
\begin{cor}
If $\omega_1,\omega_2$ generate a second-ordre $1$-parameter deformation of the $3$-BiHom-Lie superalgebra $(\mathfrak{g},\omega_0,\alpha,\beta)$, then $(\mathfrak{g},\omega_2,\alpha,\beta)$ is a $3$-BiHom-Lie superalgebra.
\end{cor}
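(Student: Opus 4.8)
The plan is to show that $\omega_2$ satisfies, on its own, all the structure equations of a $3$-BiHom-Lie superalgebra, extracting this from the system $\sum_{i+j=l}\omega_i\circ_{\alpha,\beta}\omega_j=0$ for $l=1,\dots,4$ established in the Proposition. First I would note that super-skewsymmetry of $\omega_2$ is part of the hypothesis (the $\omega_i$ are assumed super-skewsymmetric), and that the multiplicativity conditions $\omega_2\circ\alpha^{\otimes 3}=\alpha\circ\omega_2$ and $\omega_2\circ\beta^{\otimes 3}=\beta\circ\omega_2$ are exactly the $l$-independent conditions already guaranteed by the Proposition. So the only thing left to prove is that $\omega_2$ by itself satisfies the $3$-BiHom-super-Jacobi identity, i.e. $\omega_2\circ_{\alpha,\beta}\omega_2=0$.

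The key step is to read off the $l=4$ instance of the deformation system. Since $\omega_i=0$ for $i\geq 3$, the sum $\sum_{i+j=4}\omega_i\circ_{\alpha,\beta}\omega_j$ collapses to the three terms $i,j\in\{2,3,4\}\cap\{0,1,2\}$ with $i+j=4$, namely
\begin{equation*}
\omega_2\circ_{\alpha,\beta}\omega_2=0,
\end{equation*}
because the only way to write $4=i+j$ with $0\le i,j\le 2$ is $i=j=2$. Hence the $l=4$ equation of $(24)$ is precisely $\omega_2\circ_{\alpha,\beta}\omega_2=0$, which is the $3$-BiHom-super-Jacobi identity for the bracket $\omega_2$ (this is the same identity that $\omega_0$ satisfies, just with $\omega_0$ replaced by $\omega_2$, as is visible from the definition of $\omega_i\circ_{\alpha,\beta}\omega_j$ specialized to $i=j=2$).

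Putting the pieces together: $(\mathfrak{g},\omega_2,\alpha,\beta)$ has a super-skewsymmetric even trilinear bracket, the maps $\alpha,\beta$ commute (inherited from the original structure) and are multiplicative for $\omega_2$ by the Proposition, and $\omega_2$ satisfies the $3$-BiHom-super-Jacobi identity by the $l=4$ relation just derived. Therefore $(\mathfrak{g},\omega_2,\alpha,\beta)$ is a $3$-BiHom-Lie superalgebra. I do not anticipate a genuine obstacle here; the only point requiring care is the bookkeeping that no other terms survive in the $l=4$ sum — in particular that $\omega_0\circ_{\alpha,\beta}\omega_4$ and $\omega_1\circ_{\alpha,\beta}\omega_3$ vanish because $\omega_3=\omega_4=0$ — and that the specialization $i=j=2$ of the operator $\omega_i\circ_{\alpha,\beta}\omega_j$ is literally the defining expression of the $3$-BiHom-super-Jacobi identity written for $\omega_2$.
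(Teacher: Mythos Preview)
Your proposal is correct and follows essentially the same approach as the paper: you invoke the multiplicativity conditions $\omega_2\circ\alpha^{\otimes 3}=\alpha\circ\omega_2$ and $\omega_2\circ\beta^{\otimes 3}=\beta\circ\omega_2$ from the Proposition, and then specialize the relation $\sum_{i+j=l}\omega_i\circ_{\alpha,\beta}\omega_j=0$ to $l=4$ to obtain $\omega_2\circ_{\alpha,\beta}\omega_2=0$. Your additional remark that super-skewsymmetry is already part of the hypotheses is a minor completeness point the paper leaves implicit.
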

\begin{proof}
By $(23)$, let $i=2$, we deduce that $$\omega_2\circ\alpha^{\otimes3}=\alpha\circ\omega_2\;\;and\;\;\omega_2\circ\beta^{\otimes3}=\beta\circ\omega_2$$
and by $(24)$, let $l=4$, we deduce that
$$\omega_2\circ_{\alpha,\beta}\omega_2=0$$
which equivalent to that $(\mathfrak{g},\omega_2,\alpha,\beta)$ is a $3$-BiHom-Lie superalgebra.
\end{proof}
\begin{defn}
A deformation is said to be trivial if there exists an even linear map $N:\mathfrak{g}\longrightarrow\mathfrak{g}$ such that for all $\lambda,\;T_\lambda=id+\lambda N$ satisfies
\begin{equation}\label{alphabetatrivialdeform}
T_\lambda\circ\alpha=\alpha\circ T_\lambda\;\;and\;\;T_\lambda\circ\beta=\beta\circ T_\lambda
\end{equation}
\begin{equation}\label{trivialdeformation}
T_\lambda[x_1,x_2,x_3]_\lambda=[T_\lambda x_1,T_\lambda x_2,T_\lambda x_3],\;\forall x_1,x_2,x_3\in\mathcal{H}(\mathfrak{g})
\end{equation}
\end{defn}
Eq. \eqref{alphabetatrivialdeform} equals to $$N\circ\alpha=\alpha\circ N\;\;and\;\;N\circ\beta=\beta\circ N$$
The left hand side of Eq. \eqref{trivialdeformation} equals to
\begin{eqnarray*}
&&[x_1,x_2,x_3]+\lambda(\omega_1(x_1,x_2,x_3)+N[x_1,x_2,x_3])\\&&+
\lambda^2(\omega_2(x_1,x_2,x_3)+N\omega_1(x_1,x_2,x_3))+\lambda^3N\omega_2(x_1,x_2,x_3).
\end{eqnarray*}
The right hand side of Eq. \eqref{trivialdeformation} equals to
\begin{eqnarray*}
&&[x_1,x_2,x_3]+
\lambda([Nx_1,x_2,x_3]+[x_1,Nx_2,x_3]
+[x_1,x_2,Nx_3])\\&&
+\lambda^2([Nx_1,Nx_2,x_3]+[Nx_1,x_2,Nx_3]+[x_1,Nx_2,Nx_3])+
\lambda^3[Nx_1,Nx_2,Nx_3].
\end{eqnarray*}
Therefore, by Eq. \eqref{trivialdeformation}, we have
{\small\begin{align}
&\omega_1(x_1,x_2,x_3)+N[x_1,x_2,x_3]=[Nx_1,x_2,x_3]+
[x_1,Nx_2,x_3]+[x_1,x_2,Nx_3],\\
&\omega_2(x_1,x_2,x_3)+N\omega_1(x_1,x_2,x_3)
=[Nx_1,Nx_2,x_3]+[Nx_1,x_2,Nx_3]+[x_1,Nx_2,Nx_3],\\
&N\omega_2(x_1,x_2,x_3)=[Nx_1,Nx_2,Nx_3].
\end{align}}
Let $(\mathfrak{g},[\cdot,\cdot,\cdot],\alpha,\beta)$ be a $3$-BiHom-Lie superalgebra, and $N:\mathfrak{g}\longrightarrow\mathfrak{g}$ a linear map. Define a $3$-ary bracket $[\cdot,\cdot,\cdot]^1_N:\wedge^3\mathfrak{g}\longrightarrow\mathfrak{g}$ by
\begin{eqnarray}
[x_1,x_2,x_3]^1_N&=&[Nx_1,x_2,x_3]+[x_1,Nx_2,x_3]+
[x_1,x_2,Nx_3]-
N[x_1,x_2,x_3].
\end{eqnarray}
Then we define $3$-ary bracket $[\cdot,\cdot,\cdot]^2_N:\wedge^3\mathfrak{g}\longrightarrow\mathfrak{g}$, via induction by
\begin{equation}
[x_1,x_2,x_3]^2_N=[Nx_1,Nx_2,x_3]+[Nx_1,x_2,Nx_3]+
[x_1,Nx_2),Nx_3]-N[x_1,x_2,x_3]^1_N.
\end{equation}
\begin{defn}
Let $(\mathfrak{g},[\cdot,\cdot,\cdot],\alpha,\beta)$ be a $3$-BiHom-Lie superalgebra. An even linear map $N:\mathfrak{g}\rightarrow\mathfrak{g}$ is called a Nijenhuis operator
if
\begin{eqnarray}\label{equNijenhuis}
[N(x_1),N(x_2),N(x_3)]&=&N([x_1,x_2,x_3]^2_N)\\
&=&\displaystyle\sum_{\emptyset\neq I\subseteq[3]}(-1)^{|I|-1}N^{|I|}[\widetilde{N}(x_1),\widetilde{N}(x_2),\widetilde{N}(x_3)]\nonumber
\end{eqnarray}
$\forall x_1,x_2,x_3\in\mathfrak{g}$, where
$\widetilde{N}(x_i)=\left\{
                       \begin{array}{ll}
                         x_i &\; i\;\in I \hbox{ ;} \\
                         N(x_i) & \; i\;\not\in I\hbox{.}
                       \end{array}
                     \right.$
\end{defn}
\subsection{Some properties of Nijenhuis operators}
Let $(\mathfrak{g},[\cdot,\cdot],\alpha,\beta)$ be a BiHom-Lie superalgebra, a  Nijenhuis operator of $\mathfrak{g}$ is a linear map $N:\mathfrak{g}\to \mathfrak{g}$ compatible with $\alpha$ and $\beta$ $($ i.e $N\circ\alpha=\alpha\circ N$ and $N\circ\beta=\beta\circ N$ $)$ defined by
\begin{equation}[N(x),N(y)]=N([N(x),y]+[x,N(y)]-N([x,y])),\;\forall\;x,y\in\mathfrak{g}.\end{equation}

\begin{prop}
   Let $N$ be a Nijenhuis operator on a BiHom-Lie superalgebra $(\mathfrak{g},[\cdot,\cdot],\alpha,\beta)$ and $\tau\in\wedge\mathfrak g^*$ satisfying to the two conditions \eqref{ConditionTau1} and \eqref{ConditionTau2}. Then $N$ is a Nijenhuis operator on the induced $3$-BiHom-Lie superalgebra   $\mathfrak{g}_\tau$.
\end{prop}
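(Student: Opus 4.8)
The plan is to verify directly the two clauses defining a Nijenhuis operator on the induced $3$-BiHom-Lie superalgebra $\mathfrak g_\tau$. The compatibility $N\circ\alpha=\alpha\circ N$, $N\circ\beta=\beta\circ N$ is already part of the hypothesis that $N$ is a Nijenhuis operator on $(\mathfrak g,[\cdot,\cdot],\alpha,\beta)$, and since $\mathfrak g_\tau$ carries the same $\alpha,\beta$ it is inherited verbatim. Everything therefore reduces to the identity \eqref{equNijenhuis} for the ternary bracket $[\cdot,\cdot,\cdot]_\tau$, namely
\[ [N(x_1),N(x_2),N(x_3)]_\tau = N\bigl([x_1,x_2,x_3]^2_N\bigr), \]
where $[\cdot,\cdot,\cdot]^1_N$ and $[\cdot,\cdot,\cdot]^2_N$ are built from $[\cdot,\cdot,\cdot]_\tau$. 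Throughout, the conditions \eqref{ConditionTau1}--\eqref{ConditionTau2} are in force (they are exactly what makes $\mathfrak g_\tau$ a $3$-BiHom-Lie superalgebra by the Theorem above, and in particular force $\tau$ to vanish on odd elements); they are needed only to guarantee that the objects involved are well defined with the correct parities.

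First I would expand the left-hand side by \eqref{crochet_n}: it is a signed sum of three terms $\pm\,\tau(N(x_i))\,[N(x_j),N(x_k)]$. Applying the binary Nijenhuis identity $[N(a),N(b)]=N\bigl([N(a),b]+[a,N(b)]-N([a,b])\bigr)$ to each binary bracket and pulling the scalar $\tau(N(x_i))$ and the outer $N$ out, the left-hand side becomes $N(\Xi)$ where $\Xi$ is an explicit $\mathbb{K}$-linear combination whose only scalar coefficients are $\tau(N(x_1)),\tau(N(x_2)),\tau(N(x_3))$. In parallel I would expand $N\bigl([x_1,x_2,x_3]^2_N\bigr)$ by substituting the definitions of $[\cdot,\cdot,\cdot]^2_N$, then $[\cdot,\cdot,\cdot]^1_N$, then \eqref{crochet_n}, using linearity of $N$ and applying the binary Nijenhuis identity to every binary bracket of the form $[N(x_i),N(x_j)]$ that appears. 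The outcome is a combination whose scalar coefficients are the six numbers $\tau(N(x_i))$ and $\tau(x_i)$, $i=1,2,3$.

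The comparison is then organized according to which scalar $\tau(\cdot)$ multiplies each block. On both sides the block carrying $\tau(N(x_i))$ collapses, after the outer $N$, to $\pm\,\tau(N(x_i))\bigl(N[N(x_j),x_k]+N[x_j,N(x_k)]-N^2[x_j,x_k]\bigr)$ with one and the same Koszul sign, so these match. On the right-hand side the block carrying a bare $\tau(x_i)$ collapses, after the outer $N$, to $\pm\,\tau(x_i)\bigl(N[N(x_j),N(x_k)]-N^2[N(x_j),x_k]-N^2[x_j,N(x_k)]+N^3[x_j,x_k]\bigr)$, which is zero by the iterated binary Nijenhuis relation $N[N(x),N(y)]=N^2[N(x),y]+N^2[x,N(y)]-N^3[x,y]$ (equivalently, $N$ is a morphism from the deformed bracket $[N(x),y]+[x,N(y)]-N[x,y]$ to $[\cdot,\cdot]$); and on the left-hand side there is no bare-$\tau$ block at all. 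Hence the two sides coincide. The only genuine work --- and the main obstacle --- is to track the Koszul signs consistently through these nested expansions; no hypothesis on $\tau$ beyond those already needed for $\mathfrak g_\tau$ to exist is required, precisely because $[\cdot,\cdot,\cdot]_\tau$ is $\mathbb{K}$-bilinear in the binary bracket, so the binary Nijenhuis identity propagates to the ternary one.
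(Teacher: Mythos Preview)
Your proposal is correct and follows essentially the same route as the paper: expand both sides of the ternary Nijenhuis identity via the defining formula \eqref{crochet_n}, group terms according to whether the scalar coefficient is $\tau(N(x_i))$ or $\tau(x_i)$, observe that the $\tau(N(x_i))$ blocks match identically, and kill the residual $\tau(x_i)$ blocks using the binary Nijenhuis relation once more. The paper carries this out by writing each of the seven summands of $N([x_1,x_2,x_3]^2_N)$ explicitly and then computing the difference, while you describe the same cancellation more schematically, but the argument is the same.
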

\begin{proof}
Let $x_1,x_2,x_3\in\mathcal{H}(\mathfrak{g})$. In the first hand, we have
\begin{eqnarray*}
[N(x_1),N(x_2),N(x_3)]_\tau&=&\tau(N(x_1))[N(x_2),N(x_3)]-(-1)^{|x_1||x_2|}\tau(N(x_2))[N(x_1),N(x_3)]\\
&+&(-1)^{|x_3|(|x_1|+|x_2|)}\tau(N(x_3))[N(x_1),N(x_2)]\\
&=&\tau(N(x_1))N[Nx_2,x_3]+\tau(N(x_1))N[x_2,x_3]\\&-&\tau(N(x_1))N^2[x_2,x_3]
-(-1)^{|x_1||x_2|}\big(\tau(N(x_2))N[Nx_1,x_3]\\&+&\tau(N(x_2))N[x_1,Nx_3]-
\tau(N(x_2))N^2[x_1,x_3]\big)\\
&+&(-1)^{|x_3|(|x_1|+|x_2|)}\big(\tau(N(x_3))N[Nx_1,x_2]+\tau(N(x_3))[x_1,Nx_2]\\&-&
\tau(N(x_3))N^2([x_1,x_2])\big)
\end{eqnarray*}
Since $N$ is a Nijenhuis operator on a BiHom-Lie superalgebra $(\mathfrak{g},[\cdot,\cdot],\alpha,\beta)$.\\
In the other hand, we have
\begin{eqnarray*}
N([N(x_1),N(x_2),x_3]_\tau)&=&\tau(N(x_1))N([N(x_2),x_3])-(-1)^{|x_1||x_2|}\tau(N(x_2))N([N(x_1),x_3])\\
&+&(-1)^{|x_3|(|x_1|+|x_2|)}\tau(x_3)N([N(x_1),N(x_2)]).
\end{eqnarray*}
\begin{eqnarray*}
N([N(x_1),x_2,N(x_3)]_\tau)&=&\tau(N(x_1))N([x_2,N(x_3)])-(-1)^{|x_1||x_2|}\tau(x_2)N([N(x_1),N(x_3)])\\
&+&(-1)^{|x_3|(|x_1|+|x_2|)}\tau(N(x_3))N([N(x_1),x_2]).
\end{eqnarray*}
\begin{eqnarray*}
N([x_1,N(x_2),N(x_3)]_\tau)&=&\tau(x_1)N([N(x_2),N(x_3)])-(-1)^{|x_1||x_2|}\tau(N(x_2))N([x_1,N(x_3)])\\
&+&(-1)^{|x_3|(|x_1|+|x_2|)}\tau(N(x_3))N([x_1,N(x_2)]).
\end{eqnarray*}
\begin{eqnarray*}
N^2([N(x_1),x_2,x_3]_\tau)&=&\tau(N(x_1))N^2([x_2,x_3])-(-1)^{|x_1||x_2|}\tau(x_2)N^2([N(x_1),x_3])\\
&+&(-1)^{|x_3|(|x_1|+|x_2|)}\tau(x_3)N^2([N(x_1),x_2]).
\end{eqnarray*}
\begin{eqnarray*}
N^2([x_1,N(x_2),x_3]_\tau)&=&\tau(x_1)N^2([N(x_2),x_3])-(-1)^{|x_1||x_2|}\tau(N(x_2))N^2([x_1,x_3])\\
&+&(-1)^{|x_3|(|x_1|+|x_2|)}\tau(x_3)N^2([x_1,N(x_2)]).
\end{eqnarray*}
\begin{eqnarray*}
N^2([x_1,x_2,N(x_3)]_\tau)&=&\tau(x_1)N^2([x_2,N(x_3)])-(-1)^{|x_1||x_2|}\tau(x_2)N^2([x_1,N(x_3)])\\
&+&(-1)^{|x_3|(|x_1|+|x_2|)}\tau(N(x_3))N^2([x_1,x_2]).
\end{eqnarray*}
\begin{eqnarray*}
N^3([x_1,x_2,x_3]_\tau)&=&\tau(x_1)N^3([x_2,x_3])-(-1)^{|x_1||x_2|}\tau(x_2)N^3([x_1,x_3])\\
&+&(-1)^{|x_3|(|x_1|+|x_2|)}\tau(x_3)N^3([x_1,x_2]).
\end{eqnarray*}
We suppose that:\\
\begin{eqnarray*}
\small{ \mathcal{C}_\tau(x_1,x_2,x_3)}&=&N([N(x_1),N(x_2),x_3]_\tau)+N([N(x_1),x_2,N(x_3)]_\tau)+N([x_1,N(x_2),N(x_3)]_\tau)\\
&-&N^2([N(x_1),x_2,x_3]_\tau)-N^2([x_1,N(x_2),x_3]_\tau)-N^2([x_1,x_2,N(x_3)]_\tau)\\&+&N^3([x_1,x_2,x_3]_\tau),
\end{eqnarray*}
It  is easy to see that
\begin{eqnarray*}
[N(x_1),N(x_2),N(x_3)]_\tau-\mathcal{C}_\tau(x_1,x_2,x_3)&=&\tau(x_1)N\big([N(x_2),N(x_3)]-N([N(x_2),x_3])\\
&-&N([x_2,N(x_3)])+N^2([x_2,x_3])\big)\\
&-&(-1)^{|x_1||x_2|}\tau(x_2)N\big([N(x_1),N(x_3)]-N([N(x_1),x_3])\\
&-&N([x_1,N(x_3)])+N^2([x_1,x_3])\big)\\
&+&(-1)^{|x_3|(|x_1|+|x_2|)}\tau(x_3)N\big([N(x_1),N(x_2)]\\&-&N([N(x_1),x_2])
-N([x_1,N(x_2)])\\&+&N^2([x_1,x_2])\big)\\&=&0,
\end{eqnarray*}
which gives that $N$ is a Nijenhuis operator on $(\mathfrak{g},[\cdot,\cdot,\cdot]_\tau,\alpha,\beta)$.
\end{proof}
\begin{prop}
Let $N$ be a Nijenhuis operator on a $3$-BiHom-Lie superalgebras  $(\mathfrak{g},[\cdot,\cdot,\cdot],\alpha,\beta)$ and $R$ be a Rota-Baxter operator on a $3$-BiHom-Lie superalgebras  $(\mathfrak{g},[\cdot,\cdot,\cdot],\alpha,\beta)$ such that $R\circ N=N\circ R$ then $N$ be a Nijenhuis operator on a $3$-BiHom-Lie superalgebras  $(\mathfrak{g},[\cdot,\cdot,\cdot]_R,\alpha,\beta)$.

\end{prop}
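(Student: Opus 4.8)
The plan is to deduce the Nijenhuis identity for $[\cdot,\cdot,\cdot]_R$ from the one for $[\cdot,\cdot,\cdot]$, using only that $R$ commutes with $N$ together with the sum formula for $[\cdot,\cdot,\cdot]_R$. First note that the compatibility conditions $N\circ\alpha=\alpha\circ N$ and $N\circ\beta=\beta\circ N$ required of a Nijenhuis operator on $(\mathfrak{g},[\cdot,\cdot,\cdot]_R,\alpha,\beta)$ are exactly the ones $N$ already satisfies as a Nijenhuis operator on $(\mathfrak{g},[\cdot,\cdot,\cdot],\alpha,\beta)$, since both structures carry the same pair $\alpha,\beta$. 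So the whole content is to verify \eqref{equNijenhuis} for the bracket $[\cdot,\cdot,\cdot]_R$.

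The key observation is that, for every $\emptyset\neq I\subseteq[3]$, the slot-wise operator $\widehat{R}_I$ commutes with $N$: on a slot $i\in I$ it is the identity, and on a slot $i\notin I$ it equals $R$, which commutes with $N$ by hypothesis. Using this and substituting the expansion $[x_1,x_2,x_3]_R=\sum_{\emptyset\neq I\subseteq[3]}\lambda^{|I|-1}[\widehat{R}_I(x_1),\widehat{R}_I(x_2),\widehat{R}_I(x_3)]$ into the definitions of $[\cdot,\cdot,\cdot]^1_N$ and $[\cdot,\cdot,\cdot]^2_N$ built from $[\cdot,\cdot,\cdot]_R$, one checks by a short two-step induction that the construction $[\cdot]^k_N$ distributes over this sum, i.e.
$$[x_1,x_2,x_3]^{k}_{N}\big|_{[\cdot,\cdot,\cdot]_R}=\sum_{\emptyset\neq I\subseteq[3]}\lambda^{|I|-1}\,[\widehat{R}_I(x_1),\widehat{R}_I(x_2),\widehat{R}_I(x_3)]^{k}_{N}\big|_{[\cdot,\cdot,\cdot]}\qquad(k=1,2),$$
since applying $N$ past $\widehat{R}_I$ is harmless and every term $[N\widehat{R}_I(x_j),\cdot,\cdot]$ rewrites as $[\widehat{R}_I(Nx_j),\cdot,\cdot]$.

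The final step is then immediate: apply the Nijenhuis identity $N([y_1,y_2,y_3]^2_N)=[N(y_1),N(y_2),N(y_3)]$ of the original $3$-BiHom-Lie superalgebra to $y_j=\widehat{R}_I(x_j)$ for each $I$, use $N\widehat{R}_I=\widehat{R}_I N$ once more, and re-sum to obtain
$$N\big([x_1,x_2,x_3]^2_{N}\big|_{[\cdot,\cdot,\cdot]_R}\big)=\sum_{\emptyset\neq I\subseteq[3]}\lambda^{|I|-1}[\widehat{R}_I(N x_1),\widehat{R}_I(N x_2),\widehat{R}_I(N x_3)]=[N x_1,N x_2,N x_3]_R,$$
which is exactly \eqref{equNijenhuis} for $[\cdot,\cdot,\cdot]_R$; combined with $(\mathfrak{g},[\cdot,\cdot,\cdot]_R,\alpha,\beta)$ being a $3$-BiHom-Lie superalgebra (the earlier theorem on Rota--Baxter operators), this proves the claim. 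The only delicate point is the bookkeeping in the induction showing that $[\cdot]^k_N$ is additive in the underlying bracket; the Koszul signs are untouched throughout because $N$, $R$, $\alpha$ and $\beta$ are all even, so no sign computation beyond that already present in the definition of $[\cdot,\cdot,\cdot]_R$ is needed.
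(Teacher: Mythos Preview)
Your proof is correct and follows essentially the same approach as the paper. Both arguments use only the sum expansion of $[\cdot,\cdot,\cdot]_R$ and the commutation $N\circ R=R\circ N$; the paper works directly with the equivalent double-sum form of \eqref{equNijenhuis}, starting from $[Nx_1,Nx_2,Nx_3]_R$ and interchanging the $I$- and $J$-sums, whereas you reach the same conclusion by first checking that $[\cdot,\cdot,\cdot]^k_N$ distributes over the $I$-sum for $k=1,2$ and then invoking the Nijenhuis identity termwise---this is just a minor reorganization of the same computation.
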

\begin{proof} For all $x_1,x_2,x_3\in\mathcal{H}(\mathfrak{g})$, we have
\begin{eqnarray*}
[N(x_1),N(x_2),N(x_3)]_R&=&\displaystyle\sum_{\emptyset\neq I\subseteq[3]}\lambda^{|I|-1}[\widetilde{R}(N(x_1)),\widetilde{R}(N(x_2)),\widetilde{R}(N(x_3))]\\
&=& \displaystyle\sum_{\emptyset\neq I\subseteq[3]}\lambda^{|I|-1}[N(\widetilde{R}(x_1)),N(\widetilde{R}(x_2)),N(\widetilde{R}(x_3))]\\
&=&\displaystyle\sum_{\emptyset\neq I\subseteq[3]}\lambda^{|I|-1}\displaystyle\sum_{\emptyset\neq J\subseteq[3]}(-1)^{|J|-1}N^{|J|}[\widetilde{N}(\widetilde{R}(x_1)),\widetilde{N}(\widetilde{R}(x_2)),\widetilde{N}(\widetilde{R}(x_3))]\\
&=&\displaystyle\sum_{\emptyset\neq J\subseteq[3]}(-1)^{|J|-1}N^{|J|}\big(\displaystyle\sum_{\emptyset\neq I\subseteq[3]}\lambda^{|I|-1}[\widetilde{R}(\widetilde{N}(x_1)),\widetilde{R}(\widetilde{N}(x_2)),\widetilde{R}(\widetilde{N}(x_3))]\big)\\
&=&\displaystyle\sum_{\emptyset\neq J\subseteq[3]}(-1)^{|J|-1}N^{|J|}[\widetilde{N}(x_1),\widetilde{N}(x_2),\widetilde{N}(x_3)]_R,
\end{eqnarray*}
then $N$ is a Nijenhuis operator on a $3$-BiHom-Lie superalgebras  $(\mathfrak{g},[\cdot,\cdot,\cdot]_R,\alpha,\beta)$.
\end{proof}
\begin{prop}
Let $(\mathfrak{g},[\cdot,\cdot,\cdot],\alpha,\beta)$ be a $3$-Bihom-Lie superalgebra. If an even endomorphism $N$ is a derivation, then $N$ is a Nijenhuis operator if and only if  $N$ is a Rota-Baxter operator  of weight $0$ on $\mathfrak{g}$.
\end{prop}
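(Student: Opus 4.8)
The plan is to recognize that, for an even derivation $N$, the two ``deformed brackets'' $[\cdot,\cdot,\cdot]^1_N$ and $[\cdot,\cdot,\cdot]^2_N$ introduced just before the definition of a Nijenhuis operator simplify drastically, so that the Nijenhuis identity and the weight-$0$ Rota-Baxter identity become literally the same equation. First I would use that $N$ is even: every Koszul sign in the derivation axiom equals $+1$, so the axiom reads $N([x_1,x_2,x_3])=[N(x_1),x_2,x_3]+[x_1,N(x_2),x_3]+[x_1,x_2,N(x_3)]$ for all homogeneous $x_1,x_2,x_3$. Comparing this with the definition of $[\cdot,\cdot,\cdot]^1_N$, we see that $[x_1,x_2,x_3]^1_N=0$ identically on $\mathfrak g$.

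Next I would plug $[\cdot,\cdot,\cdot]^1_N\equiv 0$ into the inductive definition of $[\cdot,\cdot,\cdot]^2_N$, obtaining
\[
[x_1,x_2,x_3]^2_N=[N(x_1),N(x_2),x_3]+[N(x_1),x_2,N(x_3)]+[x_1,N(x_2),N(x_3)].
\]
Substituting this into the Nijenhuis equation \eqref{equNijenhuis} turns the condition ``$N$ is a Nijenhuis operator'' into
\[
[N(x_1),N(x_2),N(x_3)]=N\bigl([N(x_1),N(x_2),x_3]+[N(x_1),x_2,N(x_3)]+[x_1,N(x_2),N(x_3)]\bigr).
\]

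Finally I would compare with the Rota-Baxter axiom of weight $\lambda$ for a $3$-BiHom-Lie superalgebra: setting $\lambda=0$ and $R=N$, all terms carrying a factor $\lambda$ or $\lambda^2$ disappear and that axiom becomes exactly the displayed identity above. The auxiliary conditions $N\circ\alpha=\alpha\circ N$ and $N\circ\beta=\beta\circ N$ required in both the Nijenhuis and the Rota-Baxter definitions are already built into the hypothesis that $N$ is a derivation, so there is nothing further to verify. Hence, under the standing assumption that $N$ is an even derivation, being a Nijenhuis operator and being a Rota-Baxter operator of weight $0$ are the same property, which is the assertion. I do not expect a genuine obstacle here; the only delicate points are the parity bookkeeping that makes $[\cdot,\cdot,\cdot]^1_N$ vanish on the nose and the check that no stray $\lambda$-term survives at $\lambda=0$.
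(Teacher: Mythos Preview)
Your argument is correct and is exactly the natural one: once $N$ is an even derivation, the bracket $[\cdot,\cdot,\cdot]^1_N$ vanishes identically, so $[\cdot,\cdot,\cdot]^2_N$ reduces to the three ``two $N$'s, one identity'' terms, and the Nijenhuis identity \eqref{equNijenhuis} becomes literally the Rota-Baxter identity of weight $0$. The paper states this proposition without proof, so there is nothing to compare against; your write-up is precisely the computation one would expect and could serve as the missing proof.
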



\begin{thebibliography}{10}
\bibitem{Azcárraga} J. A. de Azcárraga and J. M. Izquierdo, On a class of n-Leibniz deformations of the simple Filippov algebras, J. Math. Phys. 52, 023521 (2011)
    \bibitem{Ab&Elh&Kay&Mak} Abdeljelil, A. B., Elhamdadi, M., Kaygorodov, I.,  Makhlouf, A. (2019). Generalized Derivations of $n$-BiHom-Lie algebras. arXiv preprint arXiv:1901.09750.
\bibitem{Ammar&Makhlouf} F. Ammar, A. Makhlouf, Hom-Lie superalgebras and Hom-Lie admissible superalgebras. J. Algebra 324(7), 1513--1528 (2010)

\bibitem{Baxter} G. Baxter, An analytic problem whose solution follows from a simple algebraic identity, Pacific J. Math. 10 (1960) 731--742.

\bibitem{Bai&Guo&Ni} C. Bai, L. Guo, X. Ni, Generalizations of the classical Yang-Baxter equation and O-operators, J. Math. Phys. 52 (2011) 063515.

\bibitem{Bai&Guo&Li&Wu} R. Bai, L. Guo, J. Li, Y. Wu, Rota--Baxter 3-Lie algebras, J. Math. Phys. 54 (2013) 063504.
\bibitem{Cartier} P. Cartier, On the structure of free Baxter algebras, Adv. Math. 9 (1972) 253--265.

\bibitem{Cantarini&Kac} N. Cantarini, V.G. Kac, Classification of simple linearly compact n-Lie superalgebras. Comm. Math. Phys. 298, 833--853 (2010)
\bibitem{Figueroa} J. Figueroa-O'Farrill, Deformations of 3-algebras. J. Math. Phys. 50 (2009), no. 11, 113514.

\bibitem{Guan} B.L. Guan, Representations for Hom-n-Lie superalgebras and structures for
generalized restricted Lie algebras. Northeast Normal University, Changchun (2014)

\bibitem{Guan&Chen&Sun} B. Guan, L. Chen and B. Sun, 3-ary Hom-Lie Superalgebras Induced By Hom-Lie Superalgebras, Adv. Appl. Clifford Algebras 27 (2017), 3063-3082.
\bibitem{Kitouni&Makhlouf&Silvestrov} A. Kitouni, A. Makhlouf and S. Silvestrov, On n-ary Generalization of BiHom-Lie Algebras and
BiHom-Associative Algebras, arXiv:1812.00094v1 (2018).
\bibitem{Liu&chen&Ma} Y. Liu, L. Chen, Y. Ma,  Hom-Nijienhuis operators and $T^\ast$-extensions of Hom-Lie superalgebras.
Linear Algebra Appl. 439(7), 2131--2144 (2013)


 \bibitem{Sun&Chen} B. Sun, L. Chen, Rota–Baxter multiplicative 3-ary Hom--Nambu--Lie algebras, J. of Geo. and Phy. 98 (2015) 400--413
\bibitem{Zhang} T. Zhang, Deformations and Extensions of 3-Lie Algebras, arXiv:1401.4656.
 \bibitem{Rota1}  G.-C. Rota, Baxter algebras and combinatorial identities I, II, Bull. Amer. Math. Soc. 75 (1969) 325--334.
\bibitem{Rota} G.-C. Rota, Baxter operators, an introduction. Gian-Carlo Rota on combinatorics, in: Contemp. Mathematicians, Birkhäuser Boston, Boston, MA, 1995, pp. 504--512.

\bibitem{Guo} S. Wang, S. Guo. BiHom-Lie superalgebra structures, arXiv preprint arXiv:1610.02290 (2016).
\end{thebibliography}
\end{document}